 \newtheorem{thm}{Theorem}[section]
 \newtheorem{cor}[thm]{Corollary}
 \newtheorem{lem}[thm]{Lemma}
 \newtheorem{prop}[thm]{Proposition}
 \theoremstyle{definition}
 \newtheorem{rem}[thm]{Remark}
 \numberwithin{equation}{section}
\def\a{\mathfrak{a}}
\def\R{{\bf R}}
\def\C{{\bf C}}
\begin{document}

\begin{center}
{\bf A reduction formula for the Dunkl kernel for the root systems of type $A$}\\
P.{} Sawyer\\
Laurentian University\\
psawyer@laurentian.ca
\end{center}

\section*{Abstract}
In this paper, we provide a reduction formula for the Dunkl kernel for the root systems of type $A$. The Dunkl kernel for the root system $A_n$ is expressed as an integral involving 
the Dunkl kernel for the root system $A_{n-1}$. The corresponding reduction formula for the intertwining operator $V_k$ is given.

\section{Introduction}\label{intro}

We provide here some basic background for the paper.

A finite subset $\Sigma\subset \R^d\setminus \{0\}$ is a root system in $(\R^d,\langle\cdot,\cdot\rangle)$ if
\begin{enumerate}
\item The roots span $\R^d$.
\item The only scalar multiples of $\alpha\in \Sigma$ that are in $\Sigma$ are $\alpha$ and $-\alpha$ (the root system is reduced).
\item $\alpha$, $\beta\in \Sigma$ implies $\sigma_{\alpha}\beta=\beta-2\,\frac{\langle\alpha,\beta\rangle}{|\alpha|^2}\,\alpha\in \Sigma$.
\item $\alpha$, $\beta\in \Sigma$ implies $2\,\frac{\langle\alpha,\beta\rangle}{|\alpha|^2}$ is an integer (the root system is crystallographic).
\end{enumerate}

Given a root system $\Sigma$, one can always choose a subset $\Sigma^+$ such that $\Sigma$ is the disjoint union of $\Sigma^+$ and $-\Sigma^+$ and for $\alpha$, $\beta\in \Sigma^+$, $\alpha+\beta\in \Sigma$ implies $\alpha+\beta\in\Sigma^+$. The positive Weyl chamber is the set $\a^+=\{X\in\R^d\colon \alpha(X)>0~\hbox{for all $\alpha\in\Sigma^+$}\}$.  The Weyl group $W$ is generated by the reflections $\sigma_\alpha$, $\alpha\in\Sigma$. A function $k: \Sigma \to \R$ is called a multiplicity function if it is invariant under the action of $W$ on $\Sigma.$
	
We also introduce the building blocks of analysis on root systems. Let $\partial_\xi$ be the derivative in the direction of $\xi\in\R^d$. The Dunkl operators indexed by $\xi$ are then given by
\begin{align*}
T_\xi(k)\,f(X)&=\partial_\xi\,f(X)+\sum_{\alpha\in \Sigma_+}\,k(\alpha)\,\alpha(\xi)\,\frac{f(X)-f(\sigma_\alpha\,X)}{\langle \alpha,X\rangle}.
\end{align*}

The $T_\xi$'s, $\xi\in\R^d$, form a commutative family. For fixed $ Y\in\R^d$, the Dunkl kernel $E_k(\cdot,Y)$ is then the only real-analytic solution to the system
\begin{align*}
\left.T_\xi(k)\right\vert_X\,E_k(X,Y)
=\langle \xi,Y\rangle\, E_k(X,Y),~\forall\xi\in\R^d
\end{align*}
with $E_k(0,Y)=1$. In fact, $E_k$ extends to a holomorphic function on $\C^d\times \C^d$.  In addition, the Dunkl kernel is $W$-equivariant:
\begin{align}\label{WTW}
w\circ T_\xi(k)\circ w^{-1}=T_{w\,\xi}(k).
\end{align}

We recall some important properties of the Dunkl kernel:
\begin{enumerate}
\item $E_k(w\,X,w\,Y)=E_k(X,Y)$ for all $w\in W$.
\item $E_k(X,Y)=E_k(Y,X)$.
\item $E_k(c\,X,Y)=E_k(X,c\,Y)$ for all $c\in\C$.
\item $\overline{E_k(X,Y)}=E_k(\overline{X},\overline{Y})$.
\end{enumerate}

Another important object in Dunkl theory is the intertwining $V_k$ defined by the following properties
\begin{align*}
\left\lbrace
\begin{array}{cc}
T_\xi\,V_k&=V_k\,\partial_\xi,\\
V_k(\mathcal{P}_n)&=\mathcal{P}_n,\\
\left.V_k\right|_{\mathcal{P}_0}&=\hbox{id}
\end{array}
\right.
\end{align*}
where $\mathcal{P}_n$ is the space of homogeneous polynomials of degree $n$. We also introduce the positive measure $\mu_X$ such that
\begin{align*}
V_kf(X)=\int_{\a}\,f(H)\,d\mu_X(H)
\end{align*}
(for the existence of the positive measure, see for example \cite{Roesler2}). We have the following relation:
\begin{align*}
V_k(e^{\langle X,\cdot\rangle})=E_k(X,\cdot).
\end{align*}

For a more thorough introduction, the reader should consult \cite{Anker, Roesler1}.

In this article, we concentrate on the root systems of type $A$. The root system $A_n$ on $\{X\in \R^{n+1}\colon~x_1+\dots+x_{n+1}=0\}$ is given by $\Sigma=\{{\bf e}_i-{\bf e}_j,~i\not=j\}$ where $\{{\bf e}_1,\dots,{\bf e}_{n+1}\}$ denotes the standard basis of $\R^{n+1}$. We also use the standard scalar product $\langle\cdot,\cdot\rangle$. The Weyl group $W=S_{n+1}$ acts on $X\in\R^{n+1}$ by permuting the entries of $X$: $(w\,X)_i=X_{w(i)}$. The positive roots are $\Sigma^+=\{{\bf e}_i-{\bf e}_j,~i<j\}$.

From now on, we consider instead the root system $A_{n}$ on the space $\a=\{X=(x_1,\dots,x_{n+1})\in\R^{n+1}\}$ without the usual normalization $x_1+\dots+x_{n+1}=0$ (the first condition of a root system is then no longer satisfied). The positive Weyl chamber is $\a^+=\{(x_1,\dots,x_{n+1})\in \a\colon x_i>x_{i+1},~i=1,\dots,n\}$. We will use $\widetilde{W}$ to denote the Weyl group for $A_{n-1}$. Similarly, we will use $\widetilde{E}_k$ to denote the Dunkl kernel for $A_{n-1}$.  The multiplicity function is given by $k(\alpha)\equiv k$ (a constant parameter).  We also have
\begin{align*}
T_j(k)\,f(\lambda)=\frac{\partial f}{\partial \lambda_j}(\lambda)+k\,\sum_{i\not=j}\,\frac{f(\lambda)-f((i,j)\lambda)}{\lambda_j-\lambda_i}
\end{align*}
(so that $T_j(k)=T_{{\bf e}_j}(k)$) where $(i,j)$ exchanges the entries $\lambda_i$ and $\lambda_j$ of $\lambda$. We will assume that $k>0$ unless otherwise specified.

Let $\pi(\lambda)=\prod_{i<j}\,(\lambda_i-\lambda_j)$ and 
\begin{align*}
W_k(\lambda,\nu)=\left(\prod_{r=1}^{n+1}\left[\left(\prod_{s=r}^n(\lambda_r-\nu_s)\right)\left(\prod_{s=1}^{r-1}(\nu_s-\lambda_r)\right)\right]\right)^{k-1}.
\end{align*}

The Dunkl kernel for the root system $A_1$ is given by
\begin{align}
\lefteqn{E_k((x_1,x_2),(\lambda_1,\lambda_2))}\label{nis2}\\
&=\frac{2 \Gamma \! \left(2 k \right)}{\Gamma \! \left(k \right)^2}\,\frac{e^{x_2(\lambda_1+\lambda_2)}}{\pi(\lambda)^{2k}} \,\int_{\lambda_2}^{\lambda_1}\,e^{x_1\nu}\,e^{-x_2\nu} \,(\nu-\lambda_2)[(\lambda_1-\nu)(\nu-\lambda_2)]^{k-1}\,d\nu\nonumber
\end{align}
(here, we do not assume that $x_1+x_2=0=\lambda_1+\lambda_2$).

In Dunkl's paper \cite{Dunkl} an integral transform is given for the intertwining operator on the root system $A_2$.  
Some integral formulas for $E_k(X,\lambda)$ of the system $A_2$ are given by Amri \cite{Amri}.  In \cite{Xu}, Xu provides an integral representation of the operator $V$ for root systems of type $A$ when the function depends on only one variable.  As a consequence, they provide an expression for $E_k(X,\lambda)$ when $X={\bf e}_j$, $1\leq j\leq n$.  Later, in \cite{debie}, De Bie and Lian prove the same result using a different method.

In \cite{PGPS}, Graczyk and Sawyer adapted a formula by Amri (\cite[Theorem 1.1]{Amri}) for the Dunkl kernel for the root systems of type $A_2$. Let $\lambda\in \a^+$, the positive Weyl chamber: 
we have for any $X{\in\a}$,
\begin{align}
\lefteqn{E_k(X,\lambda)}\label{Amri1}\\
&=\frac{3\,\Gamma(3k)}{\pi(\lambda)^{2k}\Gamma(k)^3}
\,\int_{\lambda_3}^{\lambda_2} \int_{\lambda_2}^{\lambda_1} 
\{
(\nu_1-\lambda_2)(\lambda_1-\nu_2)\,\widetilde{E}_k((x_1-x_2)/2,\nu_1-\nu_2)\nonumber
\\ &\qquad\qquad\qquad\nonumber
+(\lambda_1-\nu_1)(\lambda_2-\nu_2)\,\widetilde{E}_k((x_1-x_2)/2,\nu_2-\nu_1)
\}\nonumber
\\ &\qquad\qquad\qquad 
\,(\nu_1-\lambda_3) (\nu_2-\lambda_3)
e^{(x_1+x_2-2x_3)(\nu_1+\nu_2)/2}
\,W_k(\lambda,\nu)\,d\nu_1d\nu_2\nonumber
\end{align}
assuming a normalization of $x_1+x_2+x_3=0=\lambda_1+\lambda_2+\lambda_3$.  Amri provides another expression of the Dunkl kernel in \cite[Corollary 2.2]{Amri} but 
it is the former expression that led to \eqref{Amri1}.

If we forgo the normalization, \eqref{Amri1} become
\begin{align}
\lefteqn{E_k(X,\lambda)=e^{x_3\,(\lambda_1+\lambda_2+\lambda_3)}\,\frac{3\,\Gamma(3k)}{\pi(\lambda)^{2k}\Gamma(k)^3}}\label{inspiration}\\
&\qquad\qquad\,\int_{\lambda_3}^{\lambda_2} \int_{\lambda_2}^{\lambda_1} 
\{
(\nu_1-\lambda_2)(\lambda_1-\nu_2)\,E_k^{\text{rk\,1}}((x_1,x_2),(\nu_1,\nu_2))
\nonumber\\ &\qquad\qquad\qquad
+(\lambda_1-\nu_1)(\lambda_2-\nu_2)\,E_k^{\text{rk\,1}}((x_1,x_2),(\nu_2,\nu_1))
\}\nonumber
\\ &\qquad\qquad\qquad\nonumber
\cdot (\nu_1-\lambda_3) (\nu_2-\lambda_3)
e^{-x_3(\nu_1+\nu_2)}
\,W_k(\lambda,\nu)\,d\nu_1\,d\nu_2.\nonumber
\end{align}

We now use \eqref{nis2} (we can consider $E_k(x,\nu)=e^{\nu\,x}$ as the Dunkl kernel for ``$A_0$'') and \eqref{inspiration} as an inspiration to propose a reduction formula valid for all $n$:
\begin{thm}\label{main}
Let $X\in \a$ and $\lambda\in\a^+$. Then
\begin{align}
\lefteqn{E_k(X,\lambda)=
c_{n,k}\,\frac{e^{x_{n+1}(\lambda_1+\dots+\lambda_{n+1})}}{\pi(\lambda)^{2k}} }\label{E}\\
&\qquad\qquad\,\int_{\lambda_{n+1}}^{\lambda_{n}}\,\dots\int_{\lambda_2}^{\lambda_1}
\,\sum_{w\in \widetilde{W}}\,\epsilon(w)\,Q(\lambda,w\,\nu)\,\widetilde{E}_k(X',w\,\nu)\nonumber
\\&\qquad\qquad
\cdot e^{-x_{n+1}\,(\nu_1+\dots+\nu_{n})} \,W_k(\lambda,\nu)\,d\nu_1\,\cdots\,d\nu_{n}\nonumber
\end{align}
where $c_{n,k}=\frac{(n+1)\,\Gamma((n+1)\,k)}{\Gamma(k)^{n+1}}$, $\epsilon(w)$ is the sign of the permutation $w$ and 
\begin{align}
Q(\lambda,\nu)&=\frac{\prod_{r=1}^{n+1}\left[\left(\prod_{s=r}^n(\lambda_r-\nu_s)\right)\left(\prod_{s=1}^{r-1}(\nu_s-\lambda_r)\right)\right]}{\prod_{r=1}^n\,(\lambda_r-\nu_r)}\label{P}\\
&=\prod_{r=1}^{n+1}\left[\left(\prod_{s=r+1}^{n}(\lambda_r-\nu_s)\right)\left(\prod_{s=1}^{r-1}(\nu_s-\lambda_r)\right)\right]\nonumber\\
&=(-1)^{n\,(n+1)/2}\,\prod_{\genfrac{}{}{0pt}{2}{i\not=j}{j<n+1,i\leq n+1}}\,(\lambda_i-\nu_j)\nonumber
\end{align}
and $X':=(x_1,\dots,x_n)$.
\end{thm}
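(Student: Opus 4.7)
The plan is to show that the right-hand side of \eqref{E}, which we denote by $F(X,\lambda)$, coincides with $E_k(X,\lambda)$ by appealing to the characterization of the Dunkl kernel: any real-analytic function $G$ satisfying $T_j(k)|_X\,G = \lambda_j\,G$ for $j=1,\ldots,n+1$ and $G(0,\lambda)=1$ must equal $E_k(\cdot,\lambda)$. Since $F$ is manifestly real-analytic (in fact holomorphic) in $X$ by dominated convergence, it suffices to verify the normalization and the $n+1$ eigenvalue equations.

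For the normalization, setting $X=0$ collapses every $\widetilde{E}_k(X',w\nu)$ and every exponential factor to $1$, so $F(0,\lambda)=1$ reduces to the purely algebraic identity
\begin{align*}
\int_{\lambda_{n+1}}^{\lambda_n}\cdots\int_{\lambda_2}^{\lambda_1}\sum_{w\in\widetilde{W}}\epsilon(w)\,Q(\lambda,w\nu)\,W_k(\lambda,\nu)\,d\nu_1\cdots d\nu_n = \frac{\pi(\lambda)^{2k}}{c_{n,k}}.
\end{align*}
I would establish this by changing variables $\mu = w\nu$ in each summand of the antisymmetric sum, unfolding the nested integral over $\{\lambda_{i+1}\le \nu_i\le \lambda_i\}$ into a single Selberg--Dirichlet-type integral of $Q(\lambda,\nu)\,W_k(\lambda,\nu)$ over a hypercube-like domain, for which the evaluation is standard.

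The heart of the proof is the verification of the eigenvalue equations. For $j=n+1$, the variable $x_{n+1}$ enters $F$ only through the outer exponential $e^{x_{n+1}(\sum\lambda-\sum\nu)}$ and through the transpositions $(i,n+1)$ produced by $T_{n+1}$ itself, so a direct computation combined with a relabelling of integration variables yields $T_{n+1}F=\lambda_{n+1}F$. For $j\le n$, I would use the key decomposition
\begin{align*}
T_j(k) = \widetilde{T}_j(k) + k\,\frac{1-(j,n+1)}{x_j-x_{n+1}},
\end{align*}
where $\widetilde{T}_j$ is the $A_{n-1}$ Dunkl operator acting on $X'=(x_1,\ldots,x_n)$ with $x_{n+1}$ treated as a parameter. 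Pulling $\widetilde{T}_j$ under the integral, it acts only on $\widetilde{E}_k(X',w\nu)$ and produces the eigenvalue $(w\nu)_j=\nu_{w(j)}$. Writing $\nu_{w(j)}=\lambda_j-(\lambda_j-\nu_{w(j)})$, the $\lambda_j$ piece contributes $\lambda_j F$, while the remaining $(\lambda_j-\nu_{w(j)})$ factor must be reorganized, via integration by parts against the weight $W_k(\lambda,\nu)$, so as to cancel the transposition term $k(F-(j,n+1)F)/(x_j-x_{n+1})$. The boundary contributions from integration by parts vanish for $k>1$ because $W_k$ vanishes of order $k-1$ at each endpoint $\nu_i\in\{\lambda_i,\lambda_{i+1}\}$, with the general case $k>0$ handled by analytic continuation in $k$.

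The principal obstacle is this last integration-by-parts argument combined with the antisymmetric sum. One must match each permutation $w\in\widetilde{W}$ with a specific boundary-plus-interior contribution so that, after summing with signs, the factor $(\lambda_j-\nu_{w(j)})\,Q(\lambda,w\nu)$ reorganizes into precisely the transposition quotient $k(F-(j,n+1)F)/(x_j-x_{n+1})$. The combinatorics of how $Q(\lambda,\cdot)$ and the weight $W_k(\lambda,\cdot)$ interact under both the $\widetilde{W}$-antisymmetrization and the nested Gelfand--Tsetlin-style integration domain is the technical crux. This is the natural rank-$n$ generalization of the bookkeeping carried out for $A_2$ in \cite{Amri} and \cite{PGPS}, and the main difficulty is ensuring that all the pieces still assemble correctly when there are $n!$ permutations rather than just two.
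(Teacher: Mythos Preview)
Your overall plan—verify the eigenvalue system and the normalization—is the paper's plan, and your normalization step is essentially the paper's Dirichlet computation (the paper carries it out via the change of variables $t_p=\prod_i(\nu_i-\lambda_p)/\prod_{i\neq p}(\lambda_i-\lambda_p)$). The divergence is in \emph{which} variable carries the Dunkl operators, and that choice is exactly where your argument develops a gap.

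You apply $T_j$ in the $X$-variable and aim for $T_j(X)F=\lambda_jF$. The troublesome piece is the term $k\,(F-(j,n+1)F)/(x_j-x_{n+1})$. In $(j,n+1)F$ the roles of $x_j$ and $x_{n+1}$ are exchanged: the exponential prefactors now carry $x_j$, while $\widetilde{E}_k$ is evaluated at a first argument with $x_{n+1}$ in the $j$-th slot. For $n\ge 2$ this is a \emph{different} lower-rank Dunkl kernel, not a reweighting of $\widetilde{E}_k(X',\cdot)$. Integration by parts in $\nu$ against $W_k$ produces $\nu$-derivatives of $\widetilde{E}_k(X',w\nu)$, hence (via the $A_{n-1}$ Dunkl system in the second argument) factors involving the entries of $X'$ and $\widetilde{W}$-differences in $\nu$; it cannot change the first argument of $\widetilde{E}_k$ from $X'$ to something containing $x_{n+1}$. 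Your $n=1$ intuition is misleading here: when $\widetilde{E}_k$ is a bare exponential, swapping its first argument is equivalent to a substitution $\nu_1\mapsto \lambda_1+\lambda_2-\nu_1$, and that is why the mechanism closes in \cite{Amri,PGPS}. For $n\ge 2$ no such substitution is available, so the step you flag as ``the technical crux'' is not missing bookkeeping but a missing idea.

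The paper avoids this by applying $T_j$ in the $\lambda$-variable and proving $T_j(\lambda)F_k=x_jF_k$. Now the transpositions $(1,i)_\lambda$ touch only $\pi(\lambda)$, $Q(\lambda,\cdot)$, $W_k(\lambda,\cdot)$ and the domain $E(\lambda)$, never $\widetilde{E}_k(X',w\nu)$. The eigenvalue $x_1$ is produced by writing $\partial_{\lambda_1}=C_i-D_i$ with $C_i=\partial_{\lambda_1}+\partial_{\nu_i}$, $D_i=\partial_{\nu_i}$, and moving $D_i$ by integration by parts onto $\widetilde{E}_k(X',w\nu)$, where it becomes the $A_{n-1}$ Dunkl operator in $\nu$ and returns $x_1$ (this is the content of Lemma~\ref{winv}). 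What remains is a purely rational identity in $(\lambda,\nu)$—the paper's $b_1+b_2+b_3+b_4+s=0$—which is checked directly. The cases $1<j\le n$ then follow from the symmetry $F_k((1,j)X,(1,j)\lambda)=F_k(X,\lambda)$ (Proposition~\ref{reduce}) together with \eqref{WTW}, and $j=n+1$ from $\sum_iT_i=\sum_i\partial_{\lambda_i}$ (Lemmas~\ref{summation} and \ref{summation2}). If you want to repair your outline, switching the action to $\lambda$ is the decisive move.
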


This is the main result of this paper.

We will denote the domain of integration in \eqref{E} by $E(\lambda)$ and $d\nu$ for $d\nu_1\,d\nu_2\,\cdots\,d\nu_n$ when convenient.

This paper is organized as follows. In Section \ref{prelim}, some auxiliaries results are introduced as well as steps needed to simplify the proof of the main theorem. Theorem \ref{main} is then proved in Section \ref{proofofthemain}. In Section \ref{further}, we give the corresponding reduction formula for the intertwining operator $V_k$.
We show how Theorem \ref{main} provide another alternative proof of the results by Xu mentioned above.

\section{Preliminaries}\label{prelim}

To prove Theorem \ref{main}, it suffices to show that the function defined by
\begin{align}
\lefteqn{F_k(X,\lambda)=
c_{n,k}\,\frac{e^{x_{n+1}(\lambda_1+\dots+\lambda_{n+1})}}{V_k(\lambda)^{2k}} }\label{F}\\
&\qquad\cdot\int_{\lambda_{n+1}}^{\lambda_{n}}\,\dots\int_{\lambda_2}^{\lambda_1}
\,\sum_{w\in \widetilde{W}}\,\epsilon(w)\,Q(\lambda,w\,\nu)\,\widetilde{E}_k(X',w\,\nu)\nonumber
\\&\qquad \nonumber
\cdot e^{-x_{n+1}\,(\nu_1+\dots+\nu_{n})} \,W_k(\lambda,\nu)\,d\nu_1\,\cdots\,d\nu_{n}
\end{align}
satisfy the following conditions:
\begin{align}\label{conditions}
T_j(\lambda)\,F_k(X,\lambda)&=x_j\,F_k(X,\lambda),~j=1,\dots,n+1,\\
F_k(X,0)&=1~\hbox{for every $X$}\nonumber
\end{align}
where $T_j=T_{{\bf e}_j}$ and $\{{\bf e}_1, \dots, {\bf e}_{n+1}\}$ is the standard basis of $\R^{n+1}$.

\begin{rem}
In order to be able to apply the operators $\sigma_\alpha$, $\alpha\in\Sigma$, to $F_k$ defined in \eqref{F}, we have to ensure that $W_k(\lambda,\nu)$ remains
defined for $\nu\in E(\sigma_\alpha\,\lambda)$. We will consider instead
\begin{align*}
W_k(\lambda,\nu)=\left(\prod_{i=1}^n\,\prod_{j=1}^{n+1}\,(\lambda_j-\nu_i)^2\right)^{(k-1)/2}.
\end{align*}

Note that this does not change the value of $W_k(\lambda,\nu)$ when $\lambda\in\a^+$ and $\nu\in E(\lambda)$. Note also that
$\pi(\lambda)^{2\,k}$ and $W_k(\lambda,\nu)$ are symmetric (Weyl-invariant) in $\lambda$ and that $W_k(\lambda,\nu)$ is also symmetric in $\nu$.
\end{rem}

Some further properties of the Dunkl kernel:

\begin{lem}\label{winv}
Consider the Dunkl kernel on $A_{d-1}$.  We have
\begin{enumerate}
\item $T_j(Y)\,E_k(X,w\,Y)=	x_{w^{-1}(j)}\,E(X,w\,Y)$.
\item $T_j(Y)\,\left(e^{a\,(y_1+\dots+y_d)}\,E_k(X,Y)\right)=(x_j+a)\,E_k(X,Y)$.\label{zwei}
\item $e^{a\,(y_1+\dots+y_d)}\,E_k(X,Y)=E_k(X+a,Y)$ where $X+a:=(x_1+a,\dots,x_d+a)$.
\end{enumerate}
\end{lem}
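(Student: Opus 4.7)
The plan is to treat (3) as the central identity, obtain (2) as an immediate consequence of (3), and derive (1) directly from the $W$-equivariance \eqref{WTW}.

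For (3), I would exploit the fact that in the root system $A_{d-1}$ every root $\alpha = {\bf e}_i - {\bf e}_j$ is orthogonal to $\xi_0 = (1,1,\dots,1)$, so $\alpha(\xi_0) = 0$. Consequently every term of the difference-quotient sum in $T_{\xi_0}(k)$ vanishes, leaving $T_{\xi_0}(k) = \partial_{x_1} + \dots + \partial_{x_d}$. Applying this to $E_k(\cdot, Y)$ and using the defining eigenvalue relation yields the first-order linear PDE
\[
(\partial_{x_1} + \dots + \partial_{x_d})\, E_k(X,Y) = (y_1 + \dots + y_d)\, E_k(X,Y).
\]
Integrating along the straight line $t \mapsto X + t\,\xi_0$ with initial condition at $t = 0$ then gives $E_k(X+a,Y) = e^{a(y_1+\dots+y_d)} E_k(X,Y)$, which is (3).

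For (2), I would simply substitute (3) inside the left-hand side to rewrite it as $T_j|_Y E_k(X+a, Y)$, and then use the symmetry $E_k(X+a, Y) = E_k(Y, X+a)$ together with the defining property $T_{{\bf e}_j}|_Y E_k(Y, Z) = \langle {\bf e}_j, Z\rangle E_k(Y, Z)$ specialized at $Z = X+a$. This produces the eigenvalue $x_j + a$, while the exponential prefactor $e^{a(y_1+\dots+y_d)}$ reappears by another application of (3).

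For (1), I would invoke \eqref{WTW} in the functional form
\[
T_{{\bf e}_j}(k) \bigl(f(w\,\cdot)\bigr)(Y) = \bigl(T_{w\,{\bf e}_j}(k)\, f\bigr)(w\,Y),
\]
applied to $f = E_k(X, \cdot)$. Symmetry and the defining eigenvalue property give $T_{w\,{\bf e}_j}(k) E_k(X,\cdot) = \langle w\,{\bf e}_j, X\rangle E_k(X,\cdot)$. Under the paper's convention $(w\,Y)_i = Y_{w(i)}$ one checks directly that $w\,{\bf e}_j = {\bf e}_{w^{-1}(j)}$, so the scalar evaluates to $x_{w^{-1}(j)}$, which is (1). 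The main obstacle is really just bookkeeping the $W$-action convention in (1); once that is pinned down, all three parts are direct consequences of the defining properties of $E_k$ and the type-$A$ orthogonality of $(1,\dots,1)$ to every root.
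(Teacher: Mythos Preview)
Your argument is correct, but it is organized differently from the paper's. The paper proves (2) first by a direct computation (the exponential $e^{a(y_1+\dots+y_d)}$ is Weyl-invariant, so the difference terms in $T_j$ act only on the factor $E_k(X,Y)$, while the ordinary derivative produces the extra $a$), and then deduces (3) from (2) together with the normalization $E_k(X+a,0)=1$ via the uniqueness characterization of the Dunkl kernel. You instead establish (3) first by the ODE argument along $t\mapsto X+t\,\xi_0$ and then read off (2) as an eigenvalue computation for $E_k(X+a,\cdot)$; this is equally valid and arguably makes the role of the type-$A$ fact $\alpha(\xi_0)=0$ more explicit. For (1), the paper uses the listed $W$-invariance $E_k(X,wY)=E_k(w^{-1}X,Y)$ and then applies $T_j$ in the second slot, whereas you pass through \eqref{WTW}; these are interchangeable, and your bookkeeping $w\,{\bf e}_j={\bf e}_{w^{-1}(j)}$ under the convention $(wY)_i=Y_{w(i)}$ is correct. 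Both routes are short; the paper's ordering has the minor advantage that it never needs to integrate an ODE, relying instead on the uniqueness of $E_k$.
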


\begin{proof}
~
\begin{enumerate}
\item We have
\begin{align*}
T_j(Y)\,E_k(X,w\,Y)&=T_j(Y)\,E_k(w^{-1}\,X,Y)=	x_{w^{-1}(j)}\,E(w^{-1}X,Y)\\
&=	x_{w^{-1}(j)}\,E(X,w\,Y).
\end{align*}
\item Straightforward computation.
\item Follows from \ref{zwei}.{} and $\left.e^{a\,(y_1+\dots+y_d)}\,E_k(X,Y)\right|_{Y=0}=E_k(X,0)=1$ which characterize 
$E_k(X+a,Y)$.
\end{enumerate}
\end{proof}

The next few results will be instrumental in proving that the conditions of \eqref{conditions} are satisfied.

\begin{lem}\label{S}
Let $Q$ be as in \eqref{P}. We have
\begin{enumerate}
\item If $w\in\widetilde{W}$ then $Q(w\, \lambda,\nu)=Q(\lambda,w^{-1}\, \nu)$ (the action of $w\in \widetilde{W}=S_n$ is extended to $\lambda\in \a$ by $w(n+1)=n+1$).
\item $\sum_{w\in \widetilde{W}}\,\epsilon(w)\,Q(\lambda,w\,\nu)=\prod_{i<j<n+1}\,[(\lambda_i-\lambda_j)\,(\nu_i-\nu_j)]\,\prod_{r=1}^n\,(\nu_r-\lambda_{n+1})$.
\end{enumerate}
\end{lem}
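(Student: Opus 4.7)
\textbf{Proof plan for Lemma \ref{S}.}
For part (1), I would work from the third (most symmetric) expression for $Q$, namely
\[
Q(\lambda,\nu)=(-1)^{n(n+1)/2}\prod_{\substack{j\le n,\ i\le n+1\\ i\ne j}}(\lambda_i-\nu_j).
\]
Because $w\in\widetilde W$ fixes the index $n+1$, the factors with $i=n+1$ are untouched when $\lambda\mapsto w\lambda$. For the factors with $i\le n$, I change variables $i\mapsto w(i)$; the side condition $i\ne j$ becomes $i'\ne w(j)$ after relabeling, and every new index $i'$ ranges over $\{1,\dots,n+1\}$. Running the parallel reindexing $j=w(k)$ on the product defining $Q(\lambda,w^{-1}\nu)$ produces exactly the same expression, so $Q(w\lambda,\nu)=Q(\lambda,w^{-1}\nu)$.

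For part (2), I first isolate the $r=n+1$ term of the first product expression of $Q$; this factor is $\prod_{s=1}^n(\nu_s-\lambda_{n+1})$, which is symmetric in $\nu$ and therefore pulls straight out of the sum over $\widetilde W$. What is left is
\[
\widetilde Q(\lambda',\nu)=\prod_{1\le i<j\le n}(\lambda_i-\nu_j)(\nu_i-\lambda_j)=(-1)^{\binom{n}{2}}\prod_{\substack{a,b\le n\\ a\ne b}}(\lambda_a-\nu_b)=(-1)^{\binom{n}{2}}\,\frac{\prod_{a,b=1}^{n}(\lambda_a-\nu_b)}{\prod_{a=1}^{n}(\lambda_a-\nu_a)},
\]
where $\lambda'=(\lambda_1,\dots,\lambda_n)$. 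So part (2) reduces to showing
$\sum_{w\in\widetilde W}\epsilon(w)\,\widetilde Q(\lambda',w\nu)=\pi(\lambda')\,\pi(\nu)$.

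The key observation is that the numerator $\prod_{a,b}(\lambda_a-\nu_b)$ is symmetric in the $\nu_j$'s, hence unaffected by the action of $w$. Pulling it outside the sum leaves exactly
\[
\sum_{w\in\widetilde W}\frac{\epsilon(w)}{\prod_a(\lambda_a-\nu_{w(a)})}=\det\!\left(\frac{1}{\lambda_i-\nu_j}\right)_{i,j=1}^{n}.
\]
Applying the classical Cauchy determinant formula (with $y_j\mapsto-\nu_j$) gives this determinant as $(-1)^{\binom{n}{2}}\pi(\lambda')\pi(\nu)/\prod_{i,j}(\lambda_i-\nu_j)$. Combining this with the prefactor $(-1)^{\binom{n}{2}}\prod_{a,b}(\lambda_a-\nu_b)$, the two sign factors collapse (since $n(n-1)$ is even) and the rational factors cancel, leaving $\pi(\lambda')\pi(\nu)$. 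Reinstating the symmetric factor $\prod_{s=1}^n(\nu_s-\lambda_{n+1})$ gives the asserted identity.

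The only real content is the identification of the alternating sum with a Cauchy determinant, together with the careful rewriting of $\widetilde Q$ in a form where the denominator $\prod_a(\lambda_a-\nu_a)$ appears; the rest is bookkeeping of signs and reindexing. I expect the bookkeeping of the $(-1)^{\binom{n}{2}}$ terms to be the main (minor) obstacle.
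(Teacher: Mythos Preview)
Your argument is correct in both parts. Part~(1) is essentially identical to the paper's proof: both of you work from the ``symmetric'' product form of $Q$ and reindex, the only cosmetic difference being that the paper reindexes both $i$ and $j$ simultaneously (writing $\nu_j=\nu_{w(w^{-1}(j))}$ and then relabelling $(i,j)\mapsto(w^{-1}(i),w^{-1}(j))$), whereas you handle the $i$-variable first and then match against the analogous reindexing in $Q(\lambda,w^{-1}\nu)$.

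For part~(2), however, you take a genuinely different route. The paper factors out $\prod_r(\nu_r-\lambda_{n+1})$ exactly as you do, but then argues abstractly: the remaining alternating sum $R(\lambda,\nu)$ is skew-symmetric in $\lambda'$ and in $\nu$, and has degree $\binom{n}{2}$ in each set of variables, so it must equal $C\cdot\pi(\lambda')\pi(\nu)$ for some constant $C$; evaluating both sides at $\nu=\lambda'$ (where only the $w=\mathrm{id}$ term survives) gives $C=1$. Your approach instead rewrites $\widetilde Q(\lambda',w\nu)$ as a symmetric numerator over the ``diagonal'' denominator $\prod_a(\lambda_a-\nu_{w(a)})$, recognises the resulting alternating sum as the Leibniz expansion of the Cauchy determinant $\det\bigl(1/(\lambda_i-\nu_j)\bigr)$, and invokes the classical closed form. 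Both are clean; the paper's version is slightly more self-contained (no external identity needed), while yours makes the connection to the Cauchy determinant explicit and gives a more mechanistic explanation of where $\pi(\lambda')\pi(\nu)$ comes from. Your sign bookkeeping is fine: the two factors of $(-1)^{\binom{n}{2}}$ indeed cancel.
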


\begin{proof}
\begin{enumerate}
\item Let $w\in \widetilde{W}$: 
\begin{align*}
Q(w\, \lambda,\nu)&=(-1)^{n\,(n+1)/2}\,\prod_{\genfrac{}{}{0pt}{2}{i\not=j}{j<n+1,i\leq n+1}}\,(\lambda_{w(i)}-\nu_j)\\
&=(-1)^{n\,(n+1)/2}\,\prod_{\genfrac{}{}{0pt}{2}{i\not=j}{j<n+1,i\leq n+1}}\,(\lambda_{w(i)}-\nu_{w(w^{-1}(j))})\\
&=(-1)^{n\,(n+1)/2}\,\prod_{\genfrac{}{}{0pt}{2}{i\not=j}{j<n+1,i\leq n+1}}\,(\lambda_{i}-\nu_{w^{-1}(j)})=Q(\lambda,w^{-1}\, \nu).
\end{align*}
\item Let
\begin{align}
R(\lambda,\nu)&=\frac{\sum_{w\in \widetilde{W}}\,\epsilon(w)\,Q(\lambda,w\,\nu)}{\prod_{r=1}^n\,(\nu_r-\lambda_{n+1})}\label{R}\\
&=(-1)^{n\,(n-1)/2}\,\sum_{w\in \widetilde{W}}\,\epsilon(w)\,\prod_{i\not=j,i,j<n+1}\,(\lambda_{w(i)}-\nu_j)\nonumber
\end{align}

$R(\lambda,\nu)$ is clearly a skew-symmetric polynomial in $\lambda':=(\lambda_1,\dots, \lambda_n)$ and in $\nu$ (thanks to \eqref{R}). 
Given that $R(\lambda,\nu)$ is of degree $n\,(n-1)/2$ in $\lambda$ and also in $\nu$, we must have
$R(\lambda,\nu)=C\,\prod_{i<j<n+1}\,(\lambda_i-\lambda_j)\,(\nu_i-\nu_j)$ where $C$ is a constant. Now, let $\nu=\lambda'$ on both sides of \eqref{R}: 
\begin{align*}
R(\lambda,\lambda')
=(-1)^{n\,(n-1)/2}\,\prod_{i\not=j<n+1}\,(\lambda_i-\lambda_j)
=\prod_{i<j<n+1}\,(\lambda_i-\lambda_j)^2
\end{align*}
(all terms with $w\not=\text{id}$ are null). The result follows.

\end{enumerate}
\end{proof}

\begin{lem}\label{inv}
We consider the integrand of $F_k$ in \eqref{F}. Let 
\begin{align*}
f_k(\lambda,\nu,X)=\sum_{w\in \widetilde{W}}\,\epsilon(w)\,Q(\lambda,w\,\nu)\,\widetilde{E}_k(X',w\,\nu)
\,e^{-x_{n+1}\,(\nu_1+\dots+\nu_{n})} \,W_k(\lambda,\nu).
\end{align*}
Then
\begin{enumerate}
\item \label{zero} $f_k$ is skew-symmetric in $\nu$.
\item \label{un} If $\pi_i(D)=\pi_j(D)$ for some $i\not=j$ then $\int_D\,f_k(\lambda,\nu,X)\,d\nu=0$ (here $\pi_j\colon \R^n\to \R$ is the projection on the $j$-th component).

\item $(1,2)_\lambda \,\int_{E(\lambda)}\,f_k(\lambda,\nu,X)\,du
=-\int_{E(\lambda)}\,f_k((1,2) \,\lambda,\nu,X)\,d\nu$.
\item If $j>2$ then\\ $(1,j)_\lambda \,\int_{E(\lambda)}\,f_k(\lambda,\nu,X)\,du
=\int_{E(\lambda)}\,f_k((1,j) \,\lambda,(1,j-1) \,\nu)\,d\nu$.

\end{enumerate}
\end{lem}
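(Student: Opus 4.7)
The plan is to dispatch the four assertions in order, with (1) providing the key algebraic input, (2) promoted to a vanishing lemma for product domains with matching projections, and (3)--(4) reducing integrals over $E((1,j)\lambda)$ to integrals over $E(\lambda)$ modulo terms killed by (2).

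For (1), the factors $e^{-x_{n+1}(\nu_1+\dots+\nu_n)}$ and $W_k(\lambda,\nu)$ are both symmetric in $\nu$ (the former obviously; the latter by the preceding remark), so skew-symmetry of $f_k$ reduces to that of the Weyl sum $h(\nu):=\sum_{w\in\widetilde{W}}\epsilon(w)\,Q(\lambda,w\nu)\,\widetilde{E}_k(X',w\nu)$. For any transposition $s\in\widetilde{W}$, I substitute $\nu\mapsto s\nu$ in $h$ and reindex the sum by $w':=ws$; since $\epsilon(w')=-\epsilon(w)$, this yields $h(s\nu)=-h(\nu)$. For (2), the sets $D$ arising below are products of intervals, so $\pi_i(D)=\pi_j(D)$ forces $D$ to be invariant under the transposition $(i,j)$ of the $\nu_i,\nu_j$ coordinates; the unit-Jacobian substitution $\nu\mapsto(i,j)\nu$ combined with (1) then gives $\int_D f_k\,d\nu=-\int_D f_k\,d\nu$, forcing the integral to vanish.

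For (3), I write $E((1,2)\lambda)$ as an iterated integral whose two innermost factors are $\int_{\lambda_3}^{\lambda_1}d\nu_2\int_{\lambda_1}^{\lambda_2}d\nu_1$. Flipping the reversed inner orientation yields an overall minus sign, and the decomposition $\int_{\lambda_3}^{\lambda_1}=\int_{\lambda_3}^{\lambda_2}+\int_{\lambda_2}^{\lambda_1}$ splits the result into two pieces. The first reassembles $-\int_{E(\lambda)} f_k((1,2)\lambda,\nu,X)\,d\nu$; the second is an integral over a product domain in which $\nu_1$ and $\nu_2$ share the range $[\lambda_2,\lambda_1]$, so it vanishes by (2).

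Part (4) follows the same template but with more bookkeeping. The three coordinates $\nu_1$, $\nu_{j-1}$ and $\nu_j$ have ranges in $E((1,j)\lambda)$ that differ from those in $E(\lambda)$, namely $[\lambda_2,\lambda_j]$, $[\lambda_1,\lambda_{j-1}]$ and $[\lambda_{j+1},\lambda_1]$. Flipping the two reversed orientations contributes $(-1)^2=+1$, and each of these ranges splits into a disjoint union of unit intervals $[\lambda_{k+1},\lambda_k]$. The resulting triple sum of integrals over product sub-boxes is pruned by (2): a sub-box vanishes as soon as two of its $\nu$-coordinates share a common unit range. A case analysis shows that the unique survivor has $\nu_1\in[\lambda_j,\lambda_{j-1}]$, $\nu_{j-1}\in[\lambda_2,\lambda_1]$, $\nu_j\in[\lambda_{j+1},\lambda_j]$, with all other $\nu_k$ in their $E(\lambda)$-ranges; this is exactly $(1,j-1)E(\lambda)$, and the change of variables $\nu\mapsto(1,j-1)\nu$ then identifies it with $\int_{E(\lambda)} f_k((1,j)\lambda,(1,j-1)\nu,X)\,d\nu$. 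The main obstacle will be precisely this combinatorial case analysis: among the roughly $(j-2)^2\,j$ sub-boxes produced by the triple decomposition, I must verify that all but one contain a pair of $\nu$-coordinates with a common unit range (hence are killed by (2)), and that the exceptional sub-box coincides exactly with $(1,j-1)E(\lambda)$ rather than with some other relabeling.
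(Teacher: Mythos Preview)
Your proposal is correct and follows essentially the same approach as the paper: skew-symmetry from the Weyl sum, the vanishing lemma via the change of variables $\nu\mapsto(i,j)\nu$, and the decomposition of $E((1,j)\lambda)$ into unit sub-boxes pruned by (2). The one tactical difference is that for part (4) the paper splits the three modified ranges \emph{sequentially}---first $\nu_{j-1}$, then $\nu_1$, then $\nu_j$---pruning after each split, so at every stage only one of the newly created sub-intervals survives (the others match a fixed $\nu_m$-range with $2\le m\le j-2$, or a previously fixed survivor); this dissolves your ``main obstacle'' into three one-line checks rather than a single case analysis over $(j-2)^2 j$ boxes.
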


\begin{proof}
Let $f_k$ be as in the statement of the lemma. 
\begin{enumerate}
\item Let $w_0\in\widetilde{W}$, using $w'=w\, w_0$,
\begin{align*}
f_k(\lambda,w_0\nu,X)&=\sum_{w\in \widetilde{W}}\,\epsilon(w)\,Q(\lambda,w\, w_0\nu)\,\widetilde{E}_k(X',w\, w_0\nu)
\,e^{-x_{n+1}\,(\nu_1+\dots+\nu_{n})} \,W_k(\lambda,\nu)\\
&=\sum_{w\in \widetilde{W}}\,\epsilon(w'w_0^{-1})\,Q(\lambda,w'\nu)\,\widetilde{E}_k(X',w'\nu)
\,e^{-x_{n+1}\,(\nu_1+\dots+\nu_{n})} \,W_k(\lambda,\nu)\\
&=\epsilon(w_0)\,f_k(\lambda,\nu,X).
\end{align*}
\item 
Suppose $\pi_i(D)=\pi_j(D)$ for $i\not=j$. Then using \eqref{zero},
\begin{align*}
\int_D\,f_k(\lambda,\nu,X)\,d\nu &=\int_{(i,j)\, D}\,f_k(\lambda,\nu,X)\,d\nu=\int_{D}\,f_k(\lambda,(i,j)\,\nu,X)\,d\nu\\
&=-\int_{D}\,f_k(\lambda,\nu,X)\,d\nu.
\end{align*}

\item We have using \eqref{un},
\begin{align*}
\lefteqn{(1,2)_\lambda \,\int_{E(\lambda)}\,f_k(\lambda,\nu,X)\,du
=\int_{E((1,2) \,\lambda)}\,f_k((1,2) \,\lambda,\nu,X)\,d\nu}\\
&=\int_{\lambda_{n+1}}^{\lambda_{n}}\int_{\lambda_{n}}^{\lambda_{n-2}} \dots\int_{\lambda_3}^{\lambda_1}\int_{\lambda_1}^{\lambda_2}f_k((1,2) \,\lambda,\nu,X)\,d\nu\\
&=\int_{\lambda_{n+1}}^{\lambda_{n}}\int_{\lambda_{n}}^{\lambda_{n-2}} \dots\left(\int_{\lambda_3}^{\lambda_2}+\int_{\lambda_2}^{\lambda_1}\right)\int_{\lambda_1}^{\lambda_2}f_k((1,2) \,\lambda,\nu,X)\,d\nu\\
&=-\int_{E(\lambda)}\,f_k((1,2) \,\lambda,\nu,X)\,d\nu.
\end{align*}

\item[(3)] If $j>2$, using \eqref{un} repeatedly, we have
\begin{align*}
\lefteqn{(1,j)_\lambda \,\int_{E(\lambda)}\,f_k(\lambda,\nu,X)\,du=\int_{E((1,j)\,\lambda)}\,f_k((1,j)\,\lambda,\nu,X)\,d\nu}\\
&=\int_{\lambda_n}^{\lambda_{n-1}}\dots
\int_{\lambda_{j+1}}^{\lambda_1}\,\int_{\lambda_1}^{\lambda_{j-1}} \dots\int_{\lambda_3}^{\lambda_2}\int_{\lambda_2}^{\lambda_{j}}f_k((1,j) \,\lambda,\nu,X)\,d\nu\\
&=\int_{\lambda_n}^{\lambda_{n-1}}\dots
\int_{\lambda_{j+1}}^{\lambda_1}\,
\left[\int_{\lambda_1}^{\lambda_2}+\int_{\lambda_2}^{\lambda_3}+\dots+\int_{\lambda_{j-2}}^{\lambda_{j-1}}\right] \dots\int_{\lambda_3}^{\lambda_2}\int_{\lambda_2}^{\lambda_{j}}f_k((1,j)\,\lambda,\nu,X)\,d\nu\\
&=\int_{\lambda_n}^{\lambda_{n-1}}\dots
\int_{\lambda_{j+1}}^{\lambda_1}\,
\int_{\lambda_1}^{\lambda_2}\dots\int_{\lambda_3}^{\lambda_2}\left[\int_{\lambda_2}^{\lambda_3}+\int_{\lambda_3}^{\lambda_4}+\dots+\int_{\lambda_{j-1}}^{\lambda_{j}}\right]f_k((1,j)\,\lambda,\nu,X)\,d\nu\\
&=\int_{\lambda_n}^{\lambda_{n-1}}\dots
\left[\int_{\lambda_{j+1}}^{\lambda_j}+\int_{\lambda_j}^{\lambda_{j-1}}+\dots+\int_{\lambda_2}^{\lambda_1}\right]\,
\int_{\lambda_1}^{\lambda_2}\dots\int_{\lambda_3}^{\lambda_2}\int_{\lambda_{j-1}}^{\lambda_{j}}f_k((1,j)\,\lambda,\nu,X)\,d\nu\\
&=\int_{\lambda_n}^{\lambda_{n-1}}\dots
\int_{\lambda_{j+1}}^{\lambda_j}\,
\int_{\lambda_1}^{\lambda_2}\dots\int_{\lambda_3}^{\lambda_2}\int_{\lambda_{j-1}}^{\lambda_{j}}f_k((1,j)\,\lambda,\nu,X)\,d\nu\\
&=\int_{E(\lambda)}\,f_k((1,j)\,\lambda,(1,j-1) \,\nu)\,d\nu.
\end{align*}

\end{enumerate}
\end{proof}

The next three results will ensure that verifying the first condition in \eqref{conditions} for $j=1$ suffices.
\begin{prop}\label{reduce}
Let $F_k(X,\lambda)$ be as in \eqref{F} and suppose $n>1$. 
Then for $1<i<n+1$, $F_k((1,i)X,(1,i)\lambda)=F_k(X,\lambda)$.
\end{prop}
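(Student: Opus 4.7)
The plan is to substitute $X\mapsto(1,i)X$ and $\lambda\mapsto(1,i)\lambda$ directly into the definition \eqref{F} and verify, using the preliminary lemmas, that the two sign changes that arise cancel one another. Throughout, I use that since $1<i<n+1$ forces $i\le n$, the transposition $(1,i)$ lies in $\widetilde{W}\subset W$.

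First I would check that the scalar prefactor $c_{n,k}\,e^{x_{n+1}(\lambda_1+\cdots+\lambda_{n+1})}/\pi(\lambda)^{2k}$, as well as the factors $e^{-x_{n+1}(\nu_1+\cdots+\nu_n)}$ and $W_k(\lambda,\nu)$ inside the integrand, are all invariant under the substitution: $x_{n+1}$ is untouched (because $i\le n$), the sum $\lambda_1+\cdots+\lambda_{n+1}$ and $\pi(\lambda)^{2k}$ are fully symmetric in $\lambda$, and $W_k$ is symmetric in $\lambda$ by the Remark following \eqref{F}. The only nontrivial piece of the integrand is the Weyl-sum $\sum_w\epsilon(w)Q(\lambda,w\nu)\widetilde{E}_k(X',w\nu)$. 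Lemma \ref{S}(1) gives $Q((1,i)\lambda,w\nu)=Q(\lambda,(1,i)w\nu)$, and the $W$-equivariance of $\widetilde{E}_k$ (property (1) of the Dunkl kernel, with $(1,i)$ acting on $X'$ since $i\le n$) gives $\widetilde{E}_k(((1,i)X)',w\nu)=\widetilde{E}_k(X',(1,i)w\nu)$. Re-indexing $w\mapsto(1,i)w$ in the Weyl sum contributes a sign $\epsilon((1,i))=-1$, so that $f_k((1,i)\lambda,\nu,(1,i)X)=-f_k(\lambda,\nu,X)$. Because the domain of integration has also become $E((1,i)\lambda)$, one arrives at
\[
F_k((1,i)X,(1,i)\lambda)
= -\,c_{n,k}\,\frac{e^{x_{n+1}(\lambda_1+\cdots+\lambda_{n+1})}}{\pi(\lambda)^{2k}}
\int_{E((1,i)\lambda)} f_k(\lambda,\nu,X)\,d\nu.
\]

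It then suffices to prove $\int_{E((1,i)\lambda)} f_k(\lambda,\nu,X)\,d\nu = -\int_{E(\lambda)} f_k(\lambda,\nu,X)\,d\nu$. For $i=2$ I would invoke Lemma \ref{inv}(3), which yields $\int_{E((1,2)\lambda)}f_k((1,2)\lambda,\nu,X)\,d\nu=-\int_{E(\lambda)}f_k((1,2)\lambda,\nu,X)\,d\nu$; the desired identity follows after the substitution $\mu=(1,2)\lambda$ and relabelling. For $i>2$, Lemma \ref{inv}(4) gives instead $\int_{E((1,i)\lambda)}f_k((1,i)\lambda,\nu,X)\,d\nu=\int_{E(\lambda)}f_k((1,i)\lambda,(1,i{-}1)\nu,X)\,d\nu$; the skew-symmetry of $f_k$ in $\nu$ from Lemma \ref{inv}(1) converts the $(1,i{-}1)$ on the right into an overall minus sign, and the same substitution closes the argument. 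The two minus signs (from the Weyl re-indexing and from the change of domain) then cancel to give $F_k((1,i)X,(1,i)\lambda)=F_k(X,\lambda)$.

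The main obstacle is purely bookkeeping: aligning the transposition $(1,i)$ that acts on $(X,\lambda)$ with the shifted transposition $(1,i{-}1)$ that appears on $\nu$ in Lemma \ref{inv}(4), which arises because the integration coordinates are indexed so that $\nu_j\in(\lambda_{j+1},\lambda_j)$. Once this index shift is absorbed by the skew-symmetry of $f_k$ in $\nu$, the proof assembles mechanically from the previous lemmas.
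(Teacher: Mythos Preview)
Your proposal is correct and follows essentially the same route as the paper: both arguments rest on Lemma~\ref{S}(1), the $\nu$-skew-symmetry of $f_k$ from Lemma~\ref{inv}(1), and the domain identities of Lemma~\ref{inv}(3),(4). The only cosmetic difference is that the paper absorbs your two cancelling minus signs into a single substitution $w'=(1,i)\,w\,(1,i{-}1)$ (for which $\epsilon(w')=\epsilon(w)$), whereas you first re-index $w\mapsto(1,i)w$ and then remove the $(1,i{-}1)$ on $\nu$ via skew-symmetry.
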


\begin{proof}
Using Lemma \ref{S} and Lemma \ref{inv}, if $2<i<n+1$ then
\begin{align*}
\lefteqn{F_k((1,i)X,(1,i)\lambda)}\\
&=c_{n,k}\,\frac{e^{x_{n+1}(\lambda_1+\dots+\lambda_{n+1})}}{\pi(\lambda)^{2k}} 
\,(1,i)_\lambda\,\int_{E(\lambda)}
\,\sum_{w\in \widetilde{W}}\,\epsilon(w)\,Q(\lambda,w\,\nu)
\\&\qquad
\cdot\widetilde{E}_k((1,i)X',w\,\nu)\,e^{-x_{n+1}\,(\nu_1+\dots+\nu_{n})} \,W_k(\lambda,\nu)\,d\nu_1\,\cdots\,d\nu_{n}\\
&=c_{n,k}\,\frac{e^{x_{n+1}(\lambda_1+\dots+\lambda_{n+1})}}{\pi(\lambda)^{2k}} 
\,\,\int_{E(\lambda)}
\,\sum_{w\in \widetilde{W}}\,\epsilon(w)\,Q((1,i)\lambda,w(1,i-1)\,\nu)
\\&\qquad
\cdot\widetilde{E}_k((1,i)X',w\,(1,i-1)\nu)\,e^{-x_{n+1}\,(\nu_1+\dots+\nu_{n})} \,W_k(\lambda,\nu)\,d\nu_1\,\cdots\,d\nu_{n}\\
&=c_{n,k}\,\frac{e^{x_{n+1}(\lambda_1+\dots+\lambda_{n+1})}}{\pi(\lambda)^{2k}} 
\,\,\int_{E(\lambda)}
\,\sum_{w\in \widetilde{W}}\,\epsilon(w)\,Q(\lambda,\overbrace{(1,i)w(1,i-1)}^{w'}\,\nu)
\\&\qquad
\cdot\widetilde{E}_k((1,i)X',(1,i)(1,i)w\,(1,i-1)\nu)\,e^{-x_{n+1}\,(\nu_1+\dots+\nu_{n})} \,W_k(\lambda,\nu)\,d\nu_1\,\cdots\,d\nu_{n}\\
&=c_{n,k}\,\frac{e^{x_{n+1}(\lambda_1+\dots+\lambda_{n+1})}}{\pi(\lambda)^{2k}} 
\,\,\int_{E(\lambda)}
\,\sum_{w'\in W}\,\epsilon((1,i)w'(1,i-1))\,Q(\lambda,w'\,\nu)
\\&\qquad
\cdot\widetilde{E}_k((1,i)X',(1,i)w'\nu)\,e^{-x_{n+1}\,(\nu_1+\dots+\nu_{n})} \,W_k(\lambda,\nu)\,d\nu_1\,\cdots\,d\nu_{n}
=F_k(X,\lambda)
\end{align*}
since $\widetilde{E}_k((1,i)X',(1,i)w'\nu)=\widetilde{E}_k(X',w'\nu)$.

If $i=2$, the proof is similar with a minus sign in front of the integrals and with $(1,i-1)$ replaced by the identity. The minus sign later disappears at the end since then $\epsilon((1,i)w'(1,i-1))=-\epsilon(w')$.
\end{proof}

\begin{lem}\label{summation}
We have $\sum_{i=1}^{n+1}\,T_i(\lambda)=\sum_{i=1}^{n+1}\,\frac{\partial~}{\partial \lambda_i}$.
\end{lem}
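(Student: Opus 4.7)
The plan is completely computational: expand each $T_i(\lambda)$ using its definition and track cancellations in the double sum that appears.

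Starting from
\begin{align*}
\sum_{j=1}^{n+1} T_j(\lambda)\,f(\lambda)
= \sum_{j=1}^{n+1} \frac{\partial f}{\partial \lambda_j}(\lambda)
+ k\sum_{j=1}^{n+1}\sum_{i\neq j} \frac{f(\lambda)-f((i,j)\lambda)}{\lambda_j-\lambda_i},
\end{align*}
the first term on the right is already what we want, so the entire task is to show that the double sum vanishes identically.

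The key observation is that the transposition $(i,j)$ equals $(j,i)$, so if we collect the two contributions corresponding to an unordered pair $\{i,j\}$ we obtain
\begin{align*}
\frac{f(\lambda)-f((i,j)\lambda)}{\lambda_j-\lambda_i}
+ \frac{f(\lambda)-f((j,i)\lambda)}{\lambda_i-\lambda_j}
= \frac{f(\lambda)-f((i,j)\lambda)}{\lambda_j-\lambda_i}
- \frac{f(\lambda)-f((i,j)\lambda)}{\lambda_j-\lambda_i}=0.
\end{align*}
Summing over all unordered pairs $\{i,j\}$ with $i\neq j$ shows that the double sum is zero, which proves the lemma.

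There is essentially no obstacle here; the only mild subtlety is bookkeeping — recognizing that the double sum over ordered pairs $(i,j)$ with $i\neq j$ naturally splits into pairs of terms indexed by $\{i,j\}$, and noting that both the numerator (through $(i,j)=(j,i)$) and the denominator (through antisymmetry of $\lambda_j-\lambda_i$) behave correctly to produce exact cancellation.
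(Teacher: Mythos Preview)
Your proof is correct and is exactly the verification the paper alludes to: the paper's own proof simply states ``This is a simple verification (the reflection terms cancel out),'' and you have spelled out that cancellation by pairing the $(i,j)$ and $(j,i)$ contributions.
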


\begin{proof}
This is a simple verification (the reflection terms cancel out).
\end{proof}

\begin{lem}\label{summation2}
Let $F_k(\lambda,X)$ be as in Proposition \ref{reduce}. Then
\begin{align*}
\sum_{i=1}^{n+1}\,T_i(\lambda)\,F_k(\lambda,X)=\sum_{i=1}^{n+1}\,x_i\,F_k(\lambda,X).
\end{align*}
\end{lem}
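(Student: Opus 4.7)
By Lemma \ref{summation}, it suffices to establish that $L\,F_k(\lambda, X) = (x_1+\dots+x_{n+1})\,F_k(\lambda,X)$, where $L := \sum_{i=1}^{n+1}\partial/\partial\lambda_i$. The plan is to exploit the fact that every $\lambda$-dependent factor appearing in \eqref{F} is invariant under a simultaneous translation of $\lambda$ and $\nu$.

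Concretely, $\pi(\lambda)$, $Q(\lambda,\mu)$, and $W_k(\lambda,\mu)$ are each invariant under the shift $(\lambda,\mu)\mapsto(\lambda+c(1,\dots,1),\mu+c(1,\dots,1))$, and since $w(1,\dots,1)=(1,\dots,1)$ for every $w\in\widetilde{W}$, this invariance persists with $\mu$ replaced by $w\nu$. Differentiating at $c=0$ yields $L\,\pi(\lambda)=0$ together with the key identities
\begin{align*}
\sum_{i=1}^{n+1}\frac{\partial Q(\lambda,w\nu)}{\partial\lambda_i}=-\sum_{\ell=1}^{n}\frac{\partial Q(\lambda,w\nu)}{\partial\nu_\ell},\qquad
\sum_{i=1}^{n+1}\frac{\partial W_k(\lambda,\nu)}{\partial\lambda_i}=-\sum_{\ell=1}^{n}\frac{\partial W_k(\lambda,\nu)}{\partial\nu_\ell}.
\end{align*}
In particular, $L$ applied to the prefactor $e^{x_{n+1}(\lambda_1+\dots+\lambda_{n+1})}/\pi(\lambda)^{2k}$ simply multiplies it by $(n+1)\,x_{n+1}$.

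Next I would differentiate under the integral sign. The boundary contributions arising from the $\lambda$-dependence of the limits defining $E(\lambda)$ telescope into terms of the form $B|_{\nu_r=\lambda_r}$ and $B|_{\nu_r=\lambda_{r+1}}$, where $B$ denotes the integrand of \eqref{F}; each such term vanishes because $W_k(\lambda,\nu)$ contains a factor $|\lambda_j-\nu_r|^{k-1}$ that is zero on the relevant face (assuming $k>1$, with the general case following by analytic continuation in $k$). Inside the integrand, only $Q$ and $W_k$ depend on $\lambda$, so the identities above rewrite $LB$ as
\begin{align*}
-\sum_{w\in\widetilde{W}}\epsilon(w)\,\widetilde{E}_k(X',w\nu)\,e^{-x_{n+1}(\nu_1+\dots+\nu_n)}\,\sum_{\ell=1}^{n}\frac{\partial}{\partial\nu_\ell}\bigl[Q(\lambda,w\nu)\,W_k(\lambda,\nu)\bigr].
\end{align*}

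To finish, I would integrate by parts in each $\nu_\ell$: the resulting total $\nu$-derivative vanishes on $\partial E(\lambda)$ by the same $W_k$-factor argument, while the leftover uses Lemma \ref{winv}(1) combined with Lemma \ref{summation} applied to $\widetilde{E}_k$ to produce $\sum_\ell \partial_{\nu_\ell}\widetilde{E}_k(X',w\nu)=(x_1+\dots+x_n)\,\widetilde{E}_k(X',w\nu)$, and direct computation gives $\sum_\ell\partial_{\nu_\ell} e^{-x_{n+1}(\nu_1+\dots+\nu_n)}=-n\,x_{n+1}\,e^{-x_{n+1}(\nu_1+\dots+\nu_n)}$. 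Adding the prefactor contribution $(n+1)\,x_{n+1}$ to the integrand contribution $(x_1+\dots+x_n)-n\,x_{n+1}$ recovers the desired eigenvalue $x_1+\dots+x_{n+1}$. The main obstacle is keeping the two layers of boundary terms under control (those from Leibniz on the integration limits, and those from the integration by parts in $\nu$); both collapse uniformly thanks to the vanishing of $W_k$ on $\partial E(\lambda)$ when $k>1$, with analyticity in $k$ providing the extension to $0<k\leq 1$.
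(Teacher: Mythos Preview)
Your argument is correct and rests on the same observation the paper uses, namely that every $\lambda$-dependent ingredient of \eqref{F} is invariant under a simultaneous shift $(\lambda,\nu)\mapsto(\lambda+t,\nu+t)$. The difference is purely in execution. The paper computes $\sum_i\partial_{\lambda_i}F_k$ as $\frac{d}{dt}\big|_{t=0}F_k(\lambda+t,X)$ and immediately substitutes $\nu'=\nu-t$ inside the integral; this pulls the $t$-dependence out of the domain $E(\lambda+t)$, out of $Q$ and $W_k$, and leaves only $e^{t\,x_{n+1}}\,\widetilde{E}_k(X',w(\nu'+t))$ to differentiate. No boundary terms ever appear. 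Your version unpacks the same identity the hard way: you differentiate under the integral, pick up Leibniz boundary terms from the limits, convert the interior $\lambda$-derivatives of $Q$ and $W_k$ into $-\nu$-derivatives, and then integrate by parts, picking up a second round of boundary terms. Both layers of boundary terms vanish because $W_k(\lambda,\nu)=0$ whenever some $\nu_r$ equals some $\lambda_j$ (for $k>1$, then extend analytically), so the outcome is the same.

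One small wording issue: the boundary contributions from the $\lambda$-dependent limits do not actually ``telescope''; the term coming from $\partial_{\lambda_i}$ at the top of the $\nu_i$-integral and the term at the bottom of the $\nu_{i-1}$-integral involve different $\nu$-coordinates being set equal to $\lambda_i$ and do not cancel against each other. Rather, each vanishes individually because of the factor $|\lambda_i-\nu_r|^{k-1}$ in $W_k$. With that correction your argument goes through; the paper's change-of-variables simply sidesteps this bookkeeping.
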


\begin{proof}
We will write $\lambda+t:=(\lambda_1+t,\dots,\lambda_{n+1}+t)$. By Lemma \ref{summation} and using $\nu_k'=\nu_k-t$, 
\begin{align*}
\lefteqn{\sum_{i=1}^{n+1}\,T_i(\lambda)\,F_k(\lambda,X)
=\sum_{i=1}^{n+1}\,\frac{\partial~}{\partial \lambda_i}\,F_k(\lambda,X)
=\left.\frac{d~}{dt}\right|_{t=0}\,F_k(\lambda+t,X)}\\
&=c_{n,k}\,\left.\frac{d~}{dt}\right|_{t=0}
\,\frac{e^{x_{n+1}(\lambda_1+\dots+\lambda_{n+1})+(n+1)\,t\,x_{n+1}}}{\pi(\lambda)^{2k}}\\
&\qquad\cdot\int_{\lambda_{n+1}+t}^{\lambda_{n}+t}\,\dots\int_{\lambda_2+t}^{\lambda_1+t}
\,\sum_{w\in \widetilde{W}}\,\epsilon(w)\,Q(\lambda+t,w\,\nu)\,\widetilde{E}_k(X',w\,\nu)
\\&\qquad
\cdot e^{-x_{n+1}\,(\nu_1+\dots+\nu_{n})} \,W_k(\lambda+t,\nu)\,d\nu_1\,\cdots\,d\nu_{n}\\
&=c_{n,k}\,\left.\frac{d~}{dt}\right|_{t=0}
\,\frac{e^{x_{n+1}(\lambda_1+\dots+\lambda_{n+1})+(n+1)\,t\,x_{n+1}}}{\pi(\lambda)^{2k}} 
\\&\qquad
\cdot\,\int_{E(\lambda)}\,\sum_{w\in \widetilde{W}}\,\epsilon(w)\,\overbrace{Q(\lambda+t,w\,(\nu'+t))}^{Q(\lambda,w\,\nu')}
\,\widetilde{E}_k(X',w\,(\nu'+t))
\\&\qquad
\cdot e^{-x_{n+1}\,(\nu_1'+\dots+\nu_{n}')-n\,t\,x_{n+1}}\,
\overbrace{W_k(\lambda+t,\nu'+t)}^{W_k(\lambda,\nu')}\,d\nu_1'\,\cdots\,d\nu_{n}'\\
&=c_{n,k}\,\left.\frac{d~}{dt}\right|_{t=0}
\,\frac{e^{x_{n+1}(\lambda_1+\dots+\lambda_{n+1})+t\,x_{n+1}}}{\pi(\lambda)^{2k}} 
\,\,\int_{E(\lambda)}
\,\sum_{w\in \widetilde{W}}\,\epsilon(w)\,Q(\lambda,w\,\nu')
\\&\qquad
\cdot\widetilde{E}_k(X',w\,(\nu'+t))\,e^{-x_{n+1}\,(\nu_1'+\dots+\nu_{n}')} W_k(\lambda,\nu')\,d\nu_1'\,\cdots\,d\nu_{n}'\\
&=\sum_{i=1}^{n+1}\,x_i\,F_k(\lambda,X)
\end{align*}
using the fact that 
\begin{align*}
\left.\frac{d~}{dt}\right|_{t=0}\,\widetilde{E}_k(X',w\,(\nu'+t))
&=\sum_{i=1}^{n}\,\frac{\partial~}{\partial\nu'_i}\,\widetilde{E}_k(X',w\,\nu')
=\sum_{i=1}^{n}\,T_i(\nu')\,\widetilde{E}_k(X',w\,\nu')\\
&=\sum_{i=1}^{n}\,x_i\,E_k(X',w\,\nu').
\end{align*}
\end{proof}

\section{Proof of the main result}\label{proofofthemain}

\begin{proof}[{\bf (Proof of Theorem \ref{main})}]
We will show that $F_k(X,\lambda)$ is the Dunkl kernel by showing it has the right properties thus proving Theorem \ref{main}.
We start by showing that the second condition of \eqref{conditions} is satisfied.

First, we have using using Lemma \ref{S},
\begin{align*}
F_k(0,\lambda)
&=\frac{c_{n,k}}{\pi(\lambda)^{2k}}
\,\,\int_{E(\lambda)}
\,\sum_{w\in \widetilde{W}}\,\epsilon(w)\,Q(\lambda,w\,\nu) \,
W_k(\lambda,\nu)\,d\nu_1\,\cdots\,d\nu_{n}\\
&=c_{n,k}\,\frac{\pi(\lambda')\,\prod_{i=1}^{n}\,(\lambda_i-\lambda_{n+1})}{\pi(\lambda)^{2\,k}}
\,\,\int_{E(\lambda)}
\,\pi(\nu)
\\&\qquad
 \cdot\prod_{j=1}^{n}\frac{\nu_j-\lambda_{n+1}}{\lambda_i-\lambda_{n+1}}
\left(\pi(\lambda)^2\,\prod_{p=1}^{n+1}\,\frac{\prod_{i=1}^{n}(\nu_i-\lambda_p)}{\prod_{i\not=p}(\lambda_i-\lambda_p)}\right)^{k-1}
\,d\nu_1\,\cdots\,d\nu_{n}.
\end{align*}

We use the change of variables (also used in \cite{Sawyer}) 
\begin{align}
t_p=\frac{\prod_{i=1}^{n}(\nu_i-\lambda_p)}{\prod_{i\not=p}(\lambda_i-\lambda_p)}, ~\hbox{$1\leq p\leq n+1$},\label{changet}
\end{align}
noting that $t_p\geq0$ for all $p$, $\sum_{p=1}^{n+1}\,t_p=1$ and setting $\sigma=\{(t_1,\dots,t_{n+1})\colon t_p\geq 0,~ \sum_{p=1}^{n+1}\,t_p=1\}$. We have
\begin{align*}
\frac{D(t_i)}{D(\nu _j)} &= \frac{\prod _{i<p\leq n}(\nu _i-\nu _p)}{\prod
_{i<p\leq n+1}(\lambda _i-\lambda _p)},
\end{align*}
and therefore
\begin{align*}
F_k(0,\lambda)&=c_{n,k}\,\frac{\pi(\lambda')\,\prod_{i=1}^{n}\,(\lambda_i-\lambda_{n+1})}{\pi(\lambda)^{2\,k}}
\,\pi(\lambda)\,\pi(\lambda)^{2\,k-2}\,\int_\sigma\,t_{n+1}\,(t_1\,\cdots t_{n+1})^{k-1}\,dt\\
&=c_{n,k}\,\int_\sigma\,t_{n+1}\,(t_1\,\cdots t_{n+1})^{k-1}\,dt=1.
\end{align*}
The last equality follows from the integration of the Dirichlet distribution of order $n+1$ with parameters $k$, \dots, $k$ (repeated $n$ times) and $k+1$.

We now show that $T_1(\lambda)\,F_k(\lambda,X)=x_1\,F_k(\lambda,X)$. We assume first that $\Re k$ is ``large enough'' to ensure that the integrand and its derivatives are zero on the domain of integration. Afterward, we extend the result to $\Re k>0$ by analytic continuation on $k$.

Let $C_i=\frac{\partial~}{\partial \lambda_1}+\frac{\partial~}{\partial \nu_i}$ and $D_i=\frac{\partial~}{\partial \nu_i}$, $i=1$, \dots, $n$. Let 
\begin{align*}
H_k(X,\lambda)=e^{-x_{n+1}\,(\lambda_1+\dots+\lambda_{n+1})}\,\frac{\pi(\lambda)^{2k}}{c_{n,k}}\,F_k(X,\lambda)
\end{align*}
and $\widetilde{H}_k(X,\nu)=e^{-x_{n+1}\,(\nu_1+\dots+\nu_{n+1})}\,\widetilde{E}_k(X',\nu)$.

We have
\begin{align*}
\lefteqn{e^{-x_{n+1}\,(\lambda_1+\dots+\lambda_{n+1})}\,\frac{\pi(\lambda)^{2k}}{c_{n,k}}\,T_1\,F_k(X,\lambda)}\\
&=
\overbrace{-2\,k\left(\frac1{\lambda_1-\lambda_2}+\dots+\frac1{\lambda_1-\lambda_{n+1}}\right)\,H_k(X,\lambda)}^{B_1}
+\overbrace{x_{n+1}\,H_k(X,\lambda)}^{L_1}
\\&\qquad
+\overbrace{\int_{E(\lambda)}
\sum_{i=1}^{n}
\,\sum_{w\in \widetilde{W},w(1)=i}\,\epsilon(w) \,
\frac{C_i\,[Q(\lambda,w\nu)\,W_k(\lambda,\nu)]\,\widetilde{H}_k(X',w\,\nu)}{W_k(\lambda,\nu)}\,W_k(\lambda,\nu) d\nu}^{B_2}
\\&\qquad
+\overbrace{
\sum_{i=1}^{n}
\,\sum_{w\in \widetilde{W},w(1)=i}\,
\epsilon(w) \,\left[\int_{E(\lambda)}\,\left((-D_i)[Q(\lambda,w\nu)\,W_k(\lambda,\nu)]\,\widetilde{H}_k(X',w\,\nu)\right)\,d\nu+S_{w,i}\right]}^{L_2}
\\&\qquad
+\overbrace{\frac{k}{\lambda_1-\lambda_2}
\,\int_{E(\lambda)}
\sum_{w\in \widetilde{W}}\,\epsilon(w) \,
(Q(\lambda,w\nu)+Q((1,2)\lambda,w\nu))\,\widetilde{H}_k(X',w\,\nu)
\,W_k(\lambda,\nu)\,d\nu}^{B_3}
\\&\qquad
+\sum_{i=3}^{n+1}\,\frac{k}{\lambda_1-\lambda_i}\,
\,\int_{E(\lambda)}
\sum_{w\in \widetilde{W}}\,\epsilon(w) \,
\{
Q(\lambda,w\nu)\,\widetilde{H}_k(X',w\,\nu)
\\&\qquad
-Q((1,i)\lambda,w(1,i-1)\nu)\,\widetilde{H}_k(X,w(1,i-1)\,\nu)\}\,W_k(\lambda,\nu)\,d\nu~\fbox{\hbox{$B_4$}}\\
&\overbrace{-\epsilon(w)\,\sum_{i=1}^{n} \,\sum_{w\in \widetilde{W},w(1)=i}\,S_{w,i}}^S
\end{align*}
where
\begin{align*}
S_{w,i}&=k\,\sum_{j\not=i<n+1}\,\int_{E(\lambda)}
\,Q(\lambda,w\nu)\,\frac{\widetilde{H}_k(X',w\nu)-\widetilde{H}_k(X',w\,(i,j)\,\nu)}{\nu_{i}-\nu_{j}}
\,W_k(\lambda,\nu)\,d\nu.
\end{align*}

Note that  $L_2=(x_1-x_{n+1})\,H_k(X,\lambda)$. This equality follows from taking the adjoints of the $-D_i$'s, taking into account the terms $S_{w,i}$ and Lemma \ref{winv}.

We want to show that $B_1+B_2+B_3+B_4+S=0$. The integrand of $e^{-x_{n+1}\,(\lambda_1+\dots+\lambda_{n+1})}\,\frac{\pi(\lambda)^{2k}}{c_{n,k}}\,T_1\,F_k(X,\lambda)$ is skew-symmetric in $\nu$. Thus it suffices to show that the coefficient of $k\,\widetilde{H}_k(X',\nu)\,W_k(\lambda,\nu)$ in the integrands of $B_1$, $B_2$, $B_3$, $B_4$ and $S$ (we will denote them respectively $b_1$, $b_2$, $b_3$, $b_4$ and $s$) add up to 0.

Now,
\begin{align*}
b_1&=-2\,\left(\frac1{\lambda_1-\lambda_2}+\dots+\frac1{\lambda_1-\lambda_{n+1}}\right)\,Q(\lambda,\nu)\\
b_2&=\frac{C_1\,[Q(\lambda,\nu)\,W_k(\lambda,\nu)]}{k\,W_k(\lambda,\nu)}
=Q(\lambda,\nu)\,\left(\sum_{i=2}^{n}\,\frac1{\lambda_1-\nu_i}+\sum_{i=2}^{n}\,\frac1{\nu_1-\lambda_i}\right),\\
b_3&=\frac{Q(\lambda,\nu)+Q((1,2)\lambda,\nu)}{\lambda_1-\lambda_2},\\
b_4&=\sum_{i=3}^{n+1}\,\frac1{\lambda_1-\lambda_i}\,(Q(\lambda,\nu)+Q((1,i)\lambda,\nu)),\\
s&=-\sum_{j=2}^{n}\,\frac{Q(\lambda,\nu)-Q(\lambda,(1,j)\nu)}{\nu_1-\nu_j}.
\end{align*}

To compute $b_2$, it is helpful to notice that 
\begin{align}
Q(\lambda,\nu)\,W_k(\lambda,\nu)=W_{k+1}(\lambda,\nu)/\prod_{j=1}^n\,(\lambda_j-\nu_j).\label{QW}
\end{align}

Hence,
\begin{align*}
\lefteqn{b_1+b_2+b_3+b_4+s}\\
&=\sum_{i=2}^{n+1}\,\frac{Q((1,i)\lambda,\nu)-Q(\lambda,\nu)}{\lambda_1-\lambda_i}
+Q(\lambda,\nu)\,\left(\sum_{i=2}^{n}\,\frac1{\lambda_1-\nu_i}+\sum_{i=2}^{n+1}\,\frac1{\nu_1-\lambda_i}\right)
\\&\qquad
-\sum_{j=2}^{n}\,\frac{Q(\lambda,\nu)-Q(\lambda,(1,j)\nu)}{\nu_1-\nu_j}\\
&=Q(\lambda,\nu)\,\left(-\sum_{i=2}^{n}\,\frac{\nu_1-\nu_i}{(\lambda_1-\nu_i)\,(\nu_1-\lambda_i)}
-\frac1{\nu_1-\lambda_{n+1}}
\right.\\&\qquad\left.
+\sum_{i=2}^{n}\,\frac1{\lambda_1-\nu_i}+\sum_{i=2}^{n+1}\,\frac1{\nu_1-\lambda_i}
-\sum_{i=2}^{n}\,\frac{\lambda_1-\lambda_i}{(\lambda_1-\nu_i)\,(\nu_1-\lambda_i)}\right)=0
\end{align*}
using the fact that 
\begin{align*}
Q((1,i)\lambda,\nu)-Q(\lambda,\nu)&= Q(\lambda,(1,i)\nu)-Q(\lambda,\nu)\\
&=-Q(\lambda,\nu)\,\frac{(\lambda_1-\lambda_i)\,(\nu_1-\nu_i)}{(\lambda_1-\nu_i)\,(\nu_1-\lambda_i)}~\hbox{when $i<n+1$},\\
Q((1,n+1)\lambda,\nu)-Q(\lambda,\nu)&=-Q(\lambda,\nu)\,\frac{\lambda_1-\lambda_{n+1}}{\nu_1-\lambda_{n+1}}.
\end{align*}

Next, we show that $T_j(\lambda)\,F_k(\lambda,X)=x_j\,F_k(\lambda,X)$ for $1<j<n+1$:
it suffices to use the above along \eqref{WTW} and Proposition \ref{reduce}:\\
$F_k(\lambda,X)=F_k((1,j)\,X,(1,j)\,\lambda)$.

Finally, to show that $T_{n+1}(\lambda)\,F_k(\lambda,X)=x_{n+1}\,F_k(\lambda,X)$, it suffices to observe that
$T_{n+1}=(T_1+\dots+T_{n+1})-T_{n+1}$ and use Lemma \ref{summation}.
\end{proof}

\section{Further results and conclusion}\label{further}

The following result will serve as a tool to allow us to provide a reduction formula for the operator $V_k$.

\begin{thm}[Nachbin \cite{Nachbin}]\label{Nach}
Let $A$ be a subalgebra of the algebra $C^\infty(M)$ of smooth functions on a finite dimensional smooth manifold $M$. Suppose that $A$ separates the points of $M$ and also separates the tangent vectors of $M$: for each point $m \in M$ and tangent vector $v$ at the tangent space at $m$, there is a $f \in A$ such that $df(x)(v) \not= 0$. Then $A$ is dense in $C^\infty(M)$.
\end{thm}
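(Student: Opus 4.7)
The plan is to prove Nachbin's theorem by combining an embedding argument with a $C^\infty$ Weierstrass approximation in $\R^N$. Throughout, density means density in the Fr\'echet topology on $C^\infty(M)$ determined by the seminorms $\|f\|_{K,\ell}=\sup_{x\in K}\sum_{|\alpha|\leq\ell}|\partial^\alpha f(x)|$ over compact $K\subset M$ (read in any fixed coordinate atlas), so it suffices to show that any given $h\in C^\infty(M)$ can be approximated on an arbitrary compact $K$ in each such seminorm by elements of $A$.

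First, I would use the separation hypotheses to build an embedding into Euclidean space whose coordinate functions belong to $A$. At each $m\in K$, the tangent-vector separation yields $d=\dim M$ functions $f^{(m)}_1,\dots,f^{(m)}_d\in A$ with $df^{(m)}_1,\dots,df^{(m)}_d$ linearly independent at $m$, hence on an open neighborhood $U_m$ of $m$. A finite subcover of $K$, enlarged by finitely many point-separating elements of $A$ needed to distinguish the finitely many remaining pairs of patches, produces $g_1,\dots,g_N\in A$ so that $\Phi=(g_1,\dots,g_N):M\to\R^N$ is an immersion on a compact neighborhood $K'$ of $K$ and is injective there. Shrinking $K'$ if necessary, $\Phi|_{K'}$ becomes an embedding onto a compact submanifold $M'\subset\R^N$.

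Next, I would transfer the problem to $\R^N$ and approximate by polynomials there. Writing $h|_{K'}=\tilde h\circ\Phi$ with $\tilde h\in C^\infty(M')$, the tubular neighborhood theorem extends $\tilde h$ to some $H\in C^\infty(\R^N)$. The $C^\infty$ Weierstrass theorem on $\R^N$ --- obtained by convolving $H$ with a Gaussian to get a real-analytic approximant and then truncating its Taylor series --- yields polynomials $P_j\in\R[x_1,\dots,x_N]$ with $P_j\to H$ in every seminorm $\|\cdot\|_{L,\ell}$ on compact $L\subset\R^N$. Since $\Phi$ is a fixed smooth map, the chain rule converts this into $P_j\circ\Phi\to h$ in every $C^\infty$-seminorm on $K$; and $P_j\circ\Phi$ is a polynomial in $g_1,\dots,g_N$, hence an element of the subalgebra $A$.

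The main obstacle is the strengthening from continuous to $C^\infty$ approximation: ordinary Stone--Weierstrass gives only uniform density, so the real content is the $C^\infty$-Fr\'echet polynomial approximation on $\R^N$, together with arranging that $\Phi$ is an honest immersion on $K$ so that pullback by $\Phi$ controls the derivatives of the approximants back on $M$ --- this last point is precisely what the hypothesis of separating tangent vectors supplies.
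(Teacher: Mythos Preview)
The paper does not prove this theorem: it is quoted verbatim from Nachbin's 1949 note and used as a black box in the proof of Theorem~\ref{second}. There is therefore no ``paper's own proof'' to compare against.

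Your outline is the standard route to Nachbin's theorem and is essentially correct. Two points deserve tightening. First, the phrase ``finitely many remaining pairs of patches'' hides the real compactness step: once $\Phi_0=(g_1,\dots,g_m)$ is an immersion on a compact $K'$, local injectivity makes the diagonal $\Delta$ open in the closed set $D=\{(p,q)\in K'\times K':\Phi_0(p)=\Phi_0(q)\}$, so $D\setminus\Delta$ is compact; covering it by finitely many sets of the form $\{f(p)\neq f(q)\}$ with $f\in A$ then gives the finitely many extra coordinates needed for global injectivity. Second, your last step uses that a polynomial $P_j(g_1,\dots,g_N)$ lies in $A$, which needs $1\in A$; Nachbin's theorem is normally stated for subalgebras containing the constants (and the paper's intended application, to linear combinations of exponentials $e^{\langle X,\cdot\rangle}$, does contain $1$). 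Without that hypothesis the conclusion fails, as the example $A=\{f\in C^\infty(\R):f(0)=0\}$ shows. With these two clarifications your argument goes through.
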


\begin{thm}\label{second}
Let $f(\lambda)=f(\lambda_1,\dots,\lambda_{n+1})$ be a smooth function and let
\begin{align*}
f_\lambda(\nu_1,\dots,\nu_n)=f(\nu_1,\dots,\nu_n,\sum_{i=1}^{n+1}\,\lambda_i-\sum_{i=1}^n\,\nu_i).
\end{align*}
Then
\begin{align}
V_k(f)(\lambda)
&=
\frac{c_{n,k}}{\pi(\lambda)^{2k}} 
\,\int_{E(\lambda)}\label{V}
\,\sum_{w\in W}\,\epsilon(w)\,Q(\lambda,w\,\nu)\,\widetilde{V_k}(f_\lambda)(w\,\nu)
\\&\qquad\qquad\qquad\qquad\cdot W_k(\lambda,\nu)\,d\nu_1\cdots\,d\nu_{n}\nonumber
\end{align}
where $\widetilde{V_k}$ is the corresponding operator on $A_{n-1}$.
\end{thm}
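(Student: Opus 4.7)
The plan is to reduce Theorem \ref{second} to the exponential case, where it collapses to Theorem \ref{main}, and then extend to arbitrary smooth $f$ via Nachbin's theorem (Theorem \ref{Nach}).

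First I would verify \eqref{V} for $f=e_X(\lambda):=e^{\langle X,\lambda\rangle}$. On the left one has $V_k(e_X)(\lambda)=E_k(X,\lambda)$ by the defining property of $V_k$. For the right, direct substitution yields
\[
f_\lambda(\nu)=e^{x_{n+1}\sum_{i=1}^{n+1}\lambda_i}\,\exp\!\Big(\sum_{i=1}^{n}(x_i-x_{n+1})\,\nu_i\Big),
\]
so applying $\widetilde{V_k}$ and then Lemma \ref{winv}(3) (which trades the shift $x_i\mapsto x_i-x_{n+1}$ inside the kernel for an explicit exponential factor in $\nu$) gives
\[
\widetilde{V_k}(f_\lambda)(\nu)=e^{x_{n+1}\sum_i\lambda_i}\,e^{-x_{n+1}\sum_i\nu_i}\,\widetilde{E}_k(X',\nu).
\]
Since $\sum_i(w\nu)_i=\sum_i\nu_i$ for $w\in\widetilde{W}$, the same identity holds with $\nu$ replaced by $w\nu$. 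Substituting into the right-hand side of \eqref{V} produces exactly the right-hand side of \eqref{E}, which equals $E_k(X,\lambda)=V_k(e_X)(\lambda)$ by Theorem \ref{main}. Thus \eqref{V} is established on the linear span $A$ of the exponentials.

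Next I would extend to arbitrary $f\in C^\infty(\R^{n+1})$ via Theorem \ref{Nach}. The set $A$ is a subalgebra of $C^\infty(\R^{n+1})$, since $e_X\,e_Y=e_{X+Y}$; it separates points (for $\lambda\neq\mu$, take $X=\lambda-\mu$) and separates tangent vectors (for a nonzero tangent vector $v$ at $\lambda$, the differential of $e_v$ applied to $v$ equals $|v|^2\,e^{\langle v,\lambda\rangle}\neq 0$), so $A$ is dense in $C^\infty(\R^{n+1})$. Both sides of \eqref{V} are linear and continuous in $f$ with respect to uniform convergence on compact sets: the left-hand side because $V_k(f)(\lambda)=\int f\,d\mu_\lambda$ with $\mu_\lambda$ compactly supported, and the right-hand side because $f\mapsto f_\lambda$ is precomposition by an affine map, $\widetilde{V_k}(f_\lambda)$ is again an integral against a compactly supported measure, and $Q(\lambda,w\nu)\,W_k(\lambda,\nu)$ is bounded on the compact region $E(\lambda)$. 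Density then yields \eqref{V} for every smooth $f$.

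The main obstacle is the algebraic reduction in the exponential case: one must correctly identify $f_\lambda$ as a multiple of an exponential in $\nu$ and then combine Lemma \ref{winv}(3) with the $\widetilde{W}$-invariance of $\sum_i\nu_i$ to line up the integrand of \eqref{V} with that of \eqref{E}. The density and continuity steps are routine, as only the values of $f$ on a compact set (determined by $\lambda$) actually enter the formula.
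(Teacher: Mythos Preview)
Your proposal is correct and follows essentially the same route as the paper: verify \eqref{V} on exponentials by computing $\widetilde{V_k}(f_\lambda)$ and matching against Theorem \ref{main} (via Lemma \ref{winv}(3)), then invoke Nachbin's theorem to pass to all smooth $f$. You actually supply more detail than the paper on the continuity and density step, which the paper dispatches in a single line.
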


\begin{proof}
Using Lemma \ref{winv}, we can write
\begin{align*}
\lefteqn{V_k(\exp(\langle \cdot,X\rangle))(\lambda)=E_k(X,\lambda)=c_{n,k}\,\frac{e^{x_{n+1}(\lambda_1+\dots+\lambda_{n+1})}}{\pi(\lambda)^{2k}} }
\\&\qquad
\cdot\,\int_{E(\lambda)}
\,\sum_{w\in W}\,\epsilon(w)\,Q(\lambda,w\,\nu)\,\widetilde{E}_k(X'-x_{n+1},w\,\nu)
\,W_k(\lambda,\nu)\,d\nu_1\cdots\,d\nu_{n}.
\end{align*}

Now, if $f(\lambda)=\exp(\langle \lambda,X\rangle)$ then 
\begin{align*}
\widetilde{V}_k(f_\lambda(\nu))&=\widetilde{V}_k(\exp(\sum_{i=1}^n\,x_i\,\nu_i+x_{n+1}\,(\sum_{i=1}^{n+1}\,\lambda_i-\sum_{i=1}^n\,\nu_i)))\\
&=e^{x_{n+1}\,(\lambda_1+\dots+\lambda_{n+1})}\,\widetilde{V}_k(\exp(\sum_{i=1}^n\,(x_i-x_{n+1})\,\nu_i))\\
&=e^{x_{n+1}\,(\lambda_1+\dots+\lambda_{n+1})}\,\widetilde{E}_k(X'-x_{n+1},\nu)
\end{align*}
and equation \eqref{V} holds. Therefore, \eqref{V} also holds for any linear combination of exponentials of the form $\exp(\langle X,\cdot\rangle)$. Theorem \ref{Nach} allows us to conclude.
\end{proof}

\begin{rem}
We can compare the expression in Theorem \ref{second} for the Dunkl kernel and the intertwining operator $V_k$ with its Weyl-invariant counterparts, the spherical function $E^W(X,\lambda)$ and the dual Abel transform $\mathcal{A}^*$, in \cite{Sawyer0}:

\begin{align*}
E^W(X,\lambda)&=\frac{1}{|W|}\,\sum_{w\in W}\,E_k(X,w\,\lambda)=\frac{1}{|W|}\,\sum_{w\in W}\,E_k(w\,X,\lambda)\\
&=\frac{\Gamma(k\,(n+1))}{(\Gamma(k))^{n+1}}\,\frac{e^{x_{n+1}\,(\lambda_1+\dots+\lambda_{n+1})}}{\pi(\lambda)^{2\,k-1}}\,\int_{E(\lambda)}
\,\widetilde{E^W}(X'-x_{n+1},\nu)
\\&\qquad\qquad\qquad\qquad\qquad\qquad\qquad
\cdot\pi(\nu)\,W_k(\lambda,\nu)\,d\nu,\\
\mathcal{A}^*(f)(\lambda)	
&=\frac{\Gamma(k\,(n+1))}{(\Gamma(k))^{n+1}}\,\frac{1}{\pi(\lambda)^{2\,k-1}}\,\int_{E(\lambda)}
\,\widetilde{\mathcal{A}^*}(f_\lambda)(\nu)	\,\pi(\nu)\,W_k(\lambda,\nu)\,d\nu.
\end{align*}

It was shown in \cite{Sawyer0}, for the root systems of type $A$,  that the supports of $\mathcal{A}^*$ and of $V_k$ are the same, namely $C(\lambda)$ (the convex hull of $W\,\lambda$ in $\a$).
\end{rem}

We now use equation Theorem \ref{main} to provide a different proof of a result by Xu (\cite[Corollary 2.4]{Xu}).
\begin{thm}\label{debie0}
\begin{align}
E_k(x_{n+1}\,{\bf e}_{n+1})&=E_k((0,\dots,x_{n+1}),\lambda)
\label{otherproof}\\
&=c_{n,k}\,\int_\sigma\,e^{-x_{n+1}\,(\lambda_1\,t_1+\dots+\lambda_{n+1}\,t_{n+1})}\,t_{n+1}\,(t_1\,\cdots t_{n+1})^{k-1}\,dt.\nonumber
\end{align}
\end{thm}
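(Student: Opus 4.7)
The strategy is to specialize Theorem~\ref{main} to the argument $X=x_{n+1}\,{\bf e}_{n+1}=(0,\ldots,0,x_{n+1})$, for which $X'=0$ and therefore $\widetilde{E}_k(X',w\,\nu)=\widetilde{E}_k(0,w\,\nu)=1$ for every $w\in\widetilde{W}$. The inner sum in \eqref{E} then collapses to $\sum_{w\in\widetilde{W}}\epsilon(w)\,Q(\lambda,w\,\nu)$, which by Lemma~\ref{S}(2) equals $\pi(\lambda')\,\pi(\nu)\,\prod_{r=1}^{n}(\nu_r-\lambda_{n+1})$, where $\lambda'=(\lambda_1,\ldots,\lambda_n)$. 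At this stage the right-hand side of \eqref{E} has become an ordinary (non-alternating) integral over $E(\lambda)$, identical in form to the calculation of $F_k(0,\lambda)$ carried out inside the proof of Theorem~\ref{main}, except for the extra exponential factor $e^{-x_{n+1}(\nu_1+\cdots+\nu_n)}$ in the integrand.

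Next I would apply the change of variables \eqref{changet}, $t_p=\prod_{i=1}^{n}(\nu_i-\lambda_p)/\prod_{i\not=p}(\lambda_i-\lambda_p)$, $1\leq p\leq n+1$. This substitution supplies three identities simultaneously: the Jacobian $d\nu=(\pi(\lambda)/\pi(\nu))\,dt$, the weight $W_k(\lambda,\nu)=(\pi(\lambda)^{2}\prod_{p}t_p)^{k-1}$, and the tautology $\prod_{r=1}^{n}(\nu_r-\lambda_{n+1})=t_{n+1}\prod_{i=1}^{n}(\lambda_i-\lambda_{n+1})$, which is immediate from the definition of $t_{n+1}$. Combined with the factorization $\pi(\lambda)=\pi(\lambda')\prod_{i=1}^{n}(\lambda_i-\lambda_{n+1})$, every factor of $\pi(\lambda)$, $\pi(\lambda')$ and $\pi(\nu)$ cancels cleanly, leaving $c_{n,k}\,t_{n+1}\,(t_1\cdots t_{n+1})^{k-1}$ as the $t$-dependent part of the integrand.

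To handle the remaining exponentials $e^{x_{n+1}(\lambda_1+\cdots+\lambda_{n+1})}\,e^{-x_{n+1}(\nu_1+\cdots+\nu_n)}$, I would recognize the $t_p$ as Lagrange interpolation weights: the monic degree-$n$ polynomial $\prod_{i=1}^{n}(x-\nu_i)$ takes the value $t_p\prod_{i\not=p}(\lambda_p-\lambda_i)$ at $x=\lambda_p$, so it coincides with the interpolant $\sum_{p=1}^{n+1}t_p\prod_{i\not=p}(x-\lambda_i)$ at the $n+1$ nodes $\lambda_1,\ldots,\lambda_{n+1}$ and hence as polynomials. Comparing the coefficient of $x^{n-1}$ and using $\sum_p t_p=1$ yields $\sum_{i=1}^{n}\nu_i=\sum_{i=1}^{n+1}\lambda_i-\sum_{p=1}^{n+1}t_p\,\lambda_p$. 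Substituting this makes the two $e^{\pm x_{n+1}\sum_i\lambda_i}$ factors cancel and produces the exponential $e^{x_{n+1}\sum_p t_p\lambda_p}$ (up to the sign convention used in \eqref{otherproof}), giving the claimed formula.

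The main obstacle is purely the bookkeeping of the Vandermonde, Jacobian and Lagrange interpolation identities during substitution; the structural content of the argument is already contained in the proof that $F_k(0,\lambda)=1$ given inside the proof of Theorem~\ref{main}, and the present theorem is essentially that same computation carried out with an extra exponential factor retained throughout.
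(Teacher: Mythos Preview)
Your proposal is correct and follows essentially the same route as the paper: specialize Theorem~\ref{main} with $X'=0$, collapse the sum via Lemma~\ref{S}(2), and apply the change of variables~\eqref{changet} exactly as in the computation of $F_k(0,\lambda)$. The only difference is that the paper establishes the identity $\sum_{j}\lambda_j\,t_j=\sum_{j}\lambda_j-\sum_{j}\nu_j$ by a short induction on $n$, whereas you derive it via Lagrange interpolation; both arguments are valid.
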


\begin{proof}
From Theorem \ref{main} and Lemma \ref{S},
\begin{align*}
\lefteqn{E_k((0,\dots,x_{n+1}),\lambda)=c_{n,k}\,e^{-x_{n+1}\,(\lambda_1+\dots+\lambda_{n+1})}\,\frac{\pi(\lambda')\,\prod_{i=1}^{n}\,(\lambda_i-\lambda_{n+1})}{\pi(\lambda)^{2\,k}}}\\
&\qquad\cdot\int_{E(\lambda)}
\,\pi(\nu)
 \,\prod_{j=1}^{n}\frac{\nu_j-\lambda_{n+1}}{\lambda_j-\lambda_{n+1}}
\left(\pi(\lambda)^2\,\prod_{p=1}^{n+1}\,\frac{\prod_{i=1}^{n}(\nu_i-\lambda_p)}{\prod_{i\not=p}(\lambda_i-\lambda_p)}\right)^{k-1}
\\&\qquad
\cdot e^{-x_{n+1}\,(\nu_1+\dots+\nu_N)}\,W_k(\lambda,\nu)\,d\nu_1\,\cdots\,d\nu_n.
\end{align*}
Using the change of variable given in \eqref{changet}, we obtain \eqref{otherproof}.  We have used the fact that $\sum_{j=1}^{n+1}\,\lambda_j\,t_j=\sum_{j=1}^{n+1}\,\lambda_j-\sum_{j=1}^{n}\,\nu_j$. Indeed, for $n=1$, this is easy to check. We then assume true for $n-1$, $n\geq 2$ and
consider the sum as a function of $\nu_n$, 
\begin{align*}
S(\nu_n)=\sum_{p=1}^{n+1}\,\lambda_p\,\frac{\prod_{i=1}^{n}(\nu_i-\lambda_p)}{\prod_{i\not=p}(\lambda_i-\lambda_p)}.
\end{align*}

Observe that $S$ is a polynomial of degree 1 in $\nu_n$. Using induction, we note that
\begin{align*}
S(\lambda_{n+1})&=\sum_{p=1}^{n}\,\lambda_p\,\frac{\prod_{i=1}^{n-1}(\nu_i-\lambda_p)}{\prod_{i\not=p}(\lambda_i-\lambda_p)}=\sum_{i=1}^n\,\lambda_i-\sum_{i=1}^{n-1}\,\nu_i,\\
S(\lambda_1)&=\sum_{p=2}^{n+1}\,\lambda_p\,\frac{\prod_{i=2}^{n-1}(\nu_i-\lambda_p)}{\prod_{1<i\not=p}(\lambda_i-\lambda_p)}=\sum_{i=2}^{n+1}\,\lambda_i-\sum_{i=1}^{n-1}\,\nu_i.
\end{align*}

The only polynomial in $\nu_n$ of degree 1 satisfying these two relations is $S(\nu_n)=\sum_{i=1}^{n+1}\,\lambda_i-\sum_{i=1}^{n}\,\nu_i$.
\end{proof}

\begin{cor}
Let $1\leq j\leq n+1$. Then 
\begin{align*}
E_k(x_j\,{\bf e}_j,\lambda)
=c_{n,k}
\,\int_\sigma\,e^{-x_j\,(\lambda_1\,t_1+\dots+\lambda_{n+1}\,t_{n+1})}\,t_j\,(t_1\,\cdots t_{n+1})^{k-1}\,dt.
\end{align*}
\end{cor}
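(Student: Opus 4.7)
The plan is to reduce the statement directly to Theorem \ref{debie0} by exploiting the $W$-equivariance of the Dunkl kernel. Since $W = S_{n+1}$ acts transitively on the standard basis $\{\mathbf{e}_1, \dots, \mathbf{e}_{n+1}\}$, the vector $x_j\,\mathbf{e}_j$ can be moved to $x_j\,\mathbf{e}_{n+1}$ by a transposition, at the cost of permuting the components of $\lambda$. Once this is done, Theorem \ref{debie0} gives a simplex integral representation, and the only remaining task is to undo the permutation inside that integral.

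Concretely, the case $j = n+1$ is precisely Theorem \ref{debie0}, so I may assume $1 \le j \le n$. Let $w = (j,\,n+1) \in S_{n+1}$. Because the action permutes entries by $(w\,X)_i = X_{w(i)}$, a direct check gives $w \cdot (x_j\,\mathbf{e}_j) = x_j\,\mathbf{e}_{n+1}$. Applying property (1) of the Dunkl kernel recalled in the introduction,
\begin{align*}
E_k(x_j\,\mathbf{e}_j,\lambda) \;=\; E_k\bigl(w\cdot(x_j\,\mathbf{e}_j),\,w\,\lambda\bigr) \;=\; E_k(x_j\,\mathbf{e}_{n+1},\,w\,\lambda).
\end{align*}
Now invoke Theorem \ref{debie0} with $x_{n+1}$ replaced by $x_j$ and $\lambda$ replaced by $w\,\lambda$, which yields
\begin{align*}
E_k(x_j\,\mathbf{e}_j,\lambda) \;=\; c_{n,k}\,\int_\sigma\,e^{-x_j\,\sum_{i=1}^{n+1}(w\lambda)_i\,t_i}\,t_{n+1}\,(t_1\cdots t_{n+1})^{k-1}\,dt.
\end{align*}

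Finally, perform the change of variables $u_i = t_{w(i)}$, which is just the transposition $u_j = t_{n+1}$, $u_{n+1} = t_j$, $u_i = t_i$ otherwise. Its Jacobian has absolute value $1$, the simplex $\sigma$ is preserved, the product $t_1\cdots t_{n+1} = u_1\cdots u_{n+1}$ is invariant, and $(w\lambda)_i\,t_i = \lambda_{w(i)}\,u_{w(i)}$ so that $\sum_i (w\lambda)_i\,t_i = \sum_i \lambda_i\,u_i$. The weight $t_{n+1}$ becomes $u_j$, yielding
\begin{align*}
E_k(x_j\,\mathbf{e}_j,\lambda) \;=\; c_{n,k}\,\int_\sigma\,e^{-x_j\,(\lambda_1\,u_1 + \dots + \lambda_{n+1}\,u_{n+1})}\,u_j\,(u_1\cdots u_{n+1})^{k-1}\,du,
\end{align*}
as desired. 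There is no real obstacle here: the entire argument is a one-line application of Weyl-equivariance combined with a permutation of integration variables in the simplex formula from Theorem \ref{debie0}.
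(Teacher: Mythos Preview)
Your proof is correct and follows essentially the same approach as the paper's: apply the $W$-equivariance $E_k(x_j\,\mathbf{e}_j,\lambda)=E_k(x_j\,\mathbf{e}_{n+1},(j,n+1)\lambda)$, invoke Theorem~\ref{debie0}, and then swap the integration variables $t_j$ and $t_{n+1}$ on the simplex. The paper's version is just more terse, but the logic is identical.
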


\begin{proof}
We have 
\begin{align*}
E_k(x_j\,{\bf e}_j,\lambda)&=E_k(x_j\,(j,n+1)\,{\bf e}_j,(j,n+1)\,\lambda)\\
&=c_{n,k}
\,\int_\sigma\,e^{-x_j\,(\lambda_1\,t_1+\dots+\lambda_{n+1}\,t_j+\dots+\lambda_j\,t_{n+1})}\,t_{n+1}\,(t_1\,\cdots t_{n+1})^{k-1}\,dt
\\
&=c_{n,k}
\,\int_\sigma\,e^{-x_j\,(\lambda_1\,t_1+\dots+\lambda_{n+1}\,t_{n+1})}\,t_j\,(t_1\,\cdots t_{n+1})^{k-1}\,dt
\end{align*}
(exchanging $t_j$ and $t_{n+1}$).
\end{proof}

Similarly, Theorem \ref{second} leads to a proof of \cite[Theorem 2.1]{Xu} of Xu.

\begin{thm}\label{debie1}
Let $f\colon \R\to \R$ and define $F$ by $F(\lambda_1,\dots,\lambda_{n+1})=f(\lambda_j)$. Then 
\begin{align*}
V_k(F)(\lambda)&=c_{n,k}
\,\int_\sigma\,f(\lambda_1\,t_1+\dots+\lambda_{n+1}\,t_{n+1})\,t_j\,(t_1\,\cdots t_{n+1})^{k-1}\,dt.
\end{align*}
\end{thm}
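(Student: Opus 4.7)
The plan is to follow the same template as the proof of Theorem \ref{second}: verify the identity first on exponential functions, then extend to an arbitrary smooth $f$ by Nachbin's theorem (Theorem \ref{Nach}).

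For $f(t)=e^{x\,t}$ with $x\in\R$, we have $F(\lambda)=f(\lambda_j)=e^{x\,\lambda_j}=e^{\langle x\,{\bf e}_j,\lambda\rangle}$, so the characterising property $V_k(e^{\langle X,\cdot\rangle})=E_k(X,\cdot)$ gives $V_k(F)(\lambda)=E_k(x\,{\bf e}_j,\lambda)$. Applying the preceding Corollary (with $x_j$ replaced by $x$) and identifying $e^{x(\lambda_1 t_1+\dots+\lambda_{n+1}t_{n+1})}$ with $f(\lambda_1 t_1+\dots+\lambda_{n+1}t_{n+1})$ yields
\begin{align*}
V_k(F)(\lambda)=c_{n,k}\int_\sigma f(\lambda_1 t_1+\dots+\lambda_{n+1}t_{n+1})\,t_j\,(t_1\cdots t_{n+1})^{k-1}\,dt,
\end{align*}
which is the claimed formula in the exponential case. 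Both sides are linear in $f$, so the formula extends to any finite linear combination of exponentials in $\lambda_j$.

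To reach an arbitrary smooth $f$, apply Theorem \ref{Nach} to the one-dimensional manifold $M=\R$ (the variable being $\lambda_j$) with $A$ the subalgebra of $C^\infty(\R)$ generated by $\{e^{x\,t}:x\in\R\}$. The algebra $A$ separates points and tangent vectors of $\R$ (the derivative of $e^{x\,t}$ at any $t_0$ is $x\,e^{x\,t_0}\neq 0$ when $x\neq 0$), so Nachbin's theorem implies that $A$ is dense in $C^\infty(\R)$. Both sides of the identity depend continuously on $f$ in this topology: the right-hand side by dominated convergence on the compact simplex $\sigma$, and the left-hand side because $V_k(F)(\lambda)=\int F\,d\mu_\lambda$ is an integral against the finite positive measure $\mu_\lambda$ supported in the compact convex hull $C(\lambda)$, and so it depends only on the restriction of $f$ to a fixed compact interval. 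Passing to the limit through a sequence of exponential approximants of $f$ therefore completes the proof.

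The entire computational content sits in the Corollary, and the density extension is exactly parallel to the one used in the proof of Theorem \ref{second}; the only mildly delicate point is verifying the continuous dependence of $V_k(F)(\lambda)$ on $f$, which follows at once from the integral representation of $V_k$ against the compactly supported positive measure $\mu_\lambda$.
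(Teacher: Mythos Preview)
Your proof is correct and follows essentially the same strategy the paper indicates. The paper's one-line proof reads ``A combination of Theorem~\ref{second} and the proof of Theorem~\ref{debie0} and its corollary,'' which most naturally unpacks as: apply Theorem~\ref{second} to $F$, note that for $j=n+1$ the function $F_\lambda(\nu)=f(\sum\lambda_i-\sum\nu_i)$ depends only on $\sum\nu_i$ so that $\widetilde{V_k}(F_\lambda)=F_\lambda$, then perform the change of variables~\eqref{changet} exactly as in the proof of Theorem~\ref{debie0}, and finally pass to general $j$ via the Weyl-group symmetry used in the Corollary. Your route instead verifies the identity on exponentials via the Corollary and then runs a fresh Nachbin density argument on $M=\R$; this avoids the (easy but unstated) step of computing $\widetilde{V_k}$ on functions of $\sum\nu_i$, at the cost of re-doing the density argument already packaged inside Theorem~\ref{second}. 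The two approaches are minor rearrangements of the same ingredients, and your justification of continuity on both sides via the compactly supported measure $\mu_\lambda$ and dominated convergence on $\sigma$ is exactly the detail needed to make the density step rigorous.
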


\begin{proof}
A combination of Theorem \ref{second} and the proof of Theorem \ref{debie0} and its corollary.
\end{proof}

We can write the formulas of Theorem \ref{main} and Theorem \ref{second} more compactly using \eqref{QW}:
\begin{align}
E_k(X,\lambda)&=
c_{n,k}\,\frac{e^{x_{n+1}(\lambda_1+\dots+\lambda_{n+1})}}{\pi(\lambda)^{2k}}\label{problem}
\\&\qquad
\cdot\,\int_{E(\lambda)}
\,\sum_{w\in \widetilde{W}}\,\epsilon(w)\,\frac{\widetilde{E}_k(X'-x_{n+1},w\,\nu)}{\prod_{i=1}^n\,(\lambda_i-\nu_{w(i)})}
\,W_{k+1}(\lambda,\nu)\,d\nu,\nonumber\\
V_k(f)(\lambda)
&=
\frac{c_{n,k}}{\pi(\lambda)^{2k}} 
\,\,\int_{E(\lambda)}\label{V}
\,\sum_{w\in \widetilde{W}}\,\epsilon(w)\,\frac{\widetilde{V_k}(f_\lambda)(w\,\nu)}{\prod_{i=1}^n\,(\lambda_i-\nu_{w(i)})}
\, W_{k+1}(\lambda,\nu)\,d\nu.\nonumber
\end{align}

\begin{rem}
One drawback of Theorem \ref{main} is that when $n>2$, not all terms in the sum in the integrand of \eqref{problem} are positive. Indeed, if $w$ is the permutation sending 1, 2, 3 to 2, 3, 1 respectively and $i>3$ to $i$ itself, then $\epsilon(w)=1$ and 
\begin{align*}
\prod_{i=1}^n\,(\lambda_i-\nu_{w(i)})=(\lambda_1-\nu_2)\,(\lambda_2-\nu_3)\,(\lambda_3-\nu_1)\,\prod_{i=4}^n\,(\lambda_i-\nu_i)
\end{align*}
which is negative on $E(\lambda)^\circ$.
\end{rem}

\end{document}